\let\emph\undefined
\newcommand{\emph}[1]{\textsl{#1}}
\numberwithin{equation}{section}
\numberwithin{equation}{section}
\newtheoremstyle{style1}
  {13pt}
  {13pt}
  {}
  {}
  {\normalfont\bfseries}
  {.}
  {.5em}
  {}
\theoremstyle{style1}
\newtheorem{definition}[equation]{Definition}
\newtheorem{example}[equation]{Example}
\newtheorem{remark}[equation]{Remark}
\newtheoremstyle{style2}
  {13pt}
  {13pt}
  {\slshape}
  {}
  {\normalfont\bfseries}
  {.}
  {.5em}
  {}
\theoremstyle{style2}
\newtheorem{lemma}[equation]{Lemma}
\newtheorem{theorem}[equation]{Theorem}
\newtheorem{proposition}[equation]{Proposition}
\newcommand{\C}{\mathbb{C}}
\newcommand{\Z}{\mathbb{Z}}
\newcommand{\DW}{\operatorname{\normalfont DW}}
\newcommand{\Hom}{\operatorname{Hom}}
\newcommand{\id}{\text{id}}
\newcommand{\fvs}{{\mathbf{FinVect}_\C}}
\newcommand{\FinVect}{{\mathbf{FinVect}_\C}}
\newcommand{\Tvs}{{2\mathbf{Vect}_\C}}
\newcommand{\TwoVect}{{2\mathbf{Vect}_\C}}
\newcommand{\Cob}{{\mathbf{Cob}}}
\newcommand{\DS}{\text{/\hspace{-0.1cm}/}}
\newcommand{\U}{\operatorname{U}}
\let\to\undefined
\newcommand{\to}{\longrightarrow}
\let\mapsto\undefined
\newcommand{\mapsto}{\longmapsto}
\newcommand{\spr}[1]{\left( #1\right)}
\newcommand{\Fund}{\operatorname{Fund}}
\newcommand{\TCob}{T\text{-}\Cob(n,n-1,n-2)}
\newcommand{\opp}{\text{opp}}
\DeclareMathSymbol{\Phiit}{\mathalpha}{letters}{"08} 
\DeclareMathSymbol{\Psiit}{\mathalpha}{letters}{"09}
\DeclareMathSymbol{\Sigmait}{\mathalpha}{letters}{"06}
\DeclareMathSymbol{\Xiit}{\mathalpha}{letters}{"04}
\DeclareMathSymbol{\Piit}{\mathalpha}{letters}{"05}\let\Pi\undefined\newcommand{\Pi}{\Piit}
\DeclareMathSymbol{\Gammait}{\mathalpha}{letters}{"00}
\DeclareMathSymbol{\Omegait}{\mathalpha}{letters}{"0A}
\DeclareMathSymbol{\Upsilonit}{\mathalpha}{letters}{"07}
\DeclareMathSymbol{\Thetait}{\mathalpha}{letters}{"02}
\let\Omega\undefined\DeclareMathSymbol{\Omega}{\mathalpha}{letters}{"0A}
\let\Lambda\undefined\DeclareMathSymbol{\Lambda}{\mathalpha}{letters}{"03}
\let\Phi\undefined\newcommand{\Phi}{\Phiit}
\let\Sigma\undefined\newcommand{\Sigma}{\Sigmait}
\let\Psi\undefined\newcommand{\Psi}{\Psiit}
\let\Gamma\undefined\newcommand{\Gamma}{\Gammait}
\title{Extended primitive HQFTs}
\author{Notes by Lukas Müller and Lukas Woike}
\begin{document}

\begin{flushright}
\small
{\sf EMPG--18--06} \\
{\sf [ZMP-HH/18-8]} \\
	\textsf{Hamburger Beiträge zur Mathematik Nr. 727}\\
\textsf{February 2018}
\end{flushright}

\vspace{10mm}

\begin{center}
	\textbf{\LARGE{Parallel Transport of Higher Flat Gerbes as an Extended Homotopy Quantum Field Theory}}\\
	\vspace{1cm}
	{\large Lukas Müller $^{a}$} \ \ and \ \ {\large Lukas Woike $^{b}$}

\vspace{5mm}

{\em $^a$ Department of Mathematics\\
Heriot-Watt University\\
Colin Maclaurin Building, Riccarton, Edinburgh EH14 4AS, U.K.}\\
and {\em Maxwell Institute for Mathematical Sciences, Edinburgh, U.K.}\\
Email: \ {\tt lm78@hw.ac.uk \ }
\\[7pt]
{\em $^b$ Fachbereich Mathematik, \ Universit\"at Hamburg\\
Bereich Algebra und Zahlentheorie\\
Bundesstra\ss e 55, \ D\,--\,20\,146\, Hamburg }\\
Email: \ {\tt  lukas.jannik.woike@uni-hamburg.de\ }
\end{center}
\vspace{1cm}
\begin{abstract}
\noindent We prove that the parallel transport of a flat $n-1$-gerbe on any given target space gives rise to an $n$-dimensional extended homotopy quantum field theory. In case the target space is the classifying space of a finite group, we provide explicit formulae for this homotopy quantum field theory in terms of transgression. Moreover, we use the geometric theory of orbifolds to give a dimension-independent version of twisted and equivariant Dijkgraaf-Witten models. Finally, we introduce twisted equivariant Dijkgraaf-Witten theories giving us in the 3-2-1-dimensional case  a new class of equivariant modular tensor categories which can be understood as twisted versions of the equivariant modular categories constructed by Maier, Nikolaus and Schweigert.
\end{abstract}

\tableofcontents

\section{Introduction}
Homotopy quantum field theories \cite{turaevhqft}
are a flavor of topological quantum field theories defined on manifolds 
equipped with a map to some fixed topological space $T$, the target space.
More precisely, an $n$-dimensional homotopy quantum field theory $Z$ is a symmetric monoidal functor from the category of oriented compact $n$-dimensional bordisms decorated with maps landing in $T$ to the category of vector spaces. 
When spelled out, this definition entails that $Z$ assigns to a closed oriented $n$-dimensional manifold $M$ with a map $\psi :M \to T$ a complex number only depending on the homotopy class of $\psi$. 
An important example is the so-called \emph{primitive homotopy quantum field theory} constructed in \cite[I.2.1]{turaevhqft} which is based on a singular cocycle $\theta \in Z^n(T;\U(1))$ and assigns to a closed oriented $n$-dimensional manifold $M$ with a map $\psi :M \to T$
the evaluation 
\begin{align}
\langle\psi^* \theta,\mu_M \rangle  \label{wzwamplitude}
\end{align} 
of the pullback $\psi^* \theta$ on the fundamental class $\mu_M$ of $M$.
In order to define the entire homotopy quantum field theory, one has to define vector spaces for $n-1$-dimensional compact 
oriented manifolds and linear maps for $n$-dimensional compact oriented bordisms with boundary -- both equipped with maps to $T$.
These assignments are subject to compatibility with the gluing of manifolds equipped with maps on them and with disjoint union.

The compatibility with gluing takes into account manifolds of dimension $n$ and $n-1$. Of course, it is desirable to also consider manifolds of higher codimension.
This leads to the notion of an \emph{extended} topological field theory \cite{baezdolan,BDSPV15} and thereby to higher categorical manifold invariants. 
In the present text, \emph{extended} means \emph{extended once}. Such a field theory assigns quantities to manifolds of dimension $n$, $n-1$ and $n-2$ (an even further enhancement is provided by fully extended field theory; see \cite{lurietft} for an introduction and the relation to the cobordism hypothesis).
Among the extended field theories there is the subclass of \emph{invertible} theories for which the following is satisfied: 
The 2-vector spaces assigned in codimension two have one simple object 
(but \emph{without} being canonically equivalent to the category of vector spaces), 
the linear functors assigned
in codimension one are given
by tensoring with a 1-dimensional vector space and the natural transformations 
assigned in top dimension are invertible.

In this paper we answer affirmatively the question whether the homotopy quantum field theory constructed from a cocycle $\theta \in Z^n(T;\U(1))$ can be defined coherently on manifolds of codimension two in the sense that one obtains an \emph{extended} homotopy quantum field theory as defined in \cite{extofk}. 
Such a theory is defined as a symmetric monoidal functor from the bicategory $\TCob$ of compact oriented bordisms with maps to $T$ to the bicategory $\TwoVect$ of 2-vector spaces.\\[2ex]
\noindent {\textbf{Main Theorem}.} \slshape
	Any cocycle $\theta \in Z^n(T;\U(1))$ on a topological space $T$ gives rise to
	an $n$-dimensional invertible homotopy quantum field theory
	\begin{align} T_\theta : \TCob \to \TwoVect \end{align} 
	with target space $T$. \\[2ex]
\normalfont

To a compact oriented $n-2$-dimensional manifold $S$ with a map $\xi : S \to T$ the field theory $T_\theta$ assigns the 2-vector space generated by the fundamental cycles of $S$, i.e.\ the $n-2$-cocycles of $S$ representing the fundamental class of $S$. The space of morphisms from a fundamental cycle $\sigma$ to a fundamental cycle $\sigma'$ is the free vector space of the $n-1$-chains $\tau$ such that $\partial \tau =\sigma'-\sigma$ modulo the relation $\widetilde \tau \sim \langle \xi^* \theta,\lambda\rangle \tau$ for two $n-1$-chains whenever $\widetilde \tau-\tau=\partial \lambda$ for some $n$-chain $\lambda$. We then proceed and assign linear functors to $n-1$-dimensional compact oriented bordisms with boundary and maps to $T$ and natural transformations to $n$-dimensional compact oriented bordisms with corners and maps to $T$.     
Upon restriction to the endomorphisms of the empty set in $\TCob$, we recover the homotopy quantum field theory given in \cite[I.2.1]{turaevhqft}.

The construction of the extended field theory from $\theta$ is more involved than the non-extended one, but also more interesting --
 in particular from the perspective of representation theory: When specifying $T$ to be the classifying space $BG$ of a finite group $G$, we can apply the orbifold construction from  \cite{extofk} to obtain an extended $n$-dimensional topological field theory. This yields a new and very easy construction of the $\theta$-twisted Dijkgraaf-Witten theory (Section~\ref{twisteddwtheories}). For $n=3$ the value of this theory on the circle is the representation category of the $\theta$-twisted Drinfeld double of $G$, and the topological quantum field theory can be used to count the different simple representations (Proposition~\ref{satznumberofsimples}). 

Using the pushforward construction from \cite{extofk}, we can also endow the equivariant Dijkgraaf-Witten models from \cite{maiernikolausschweigerteq} with a twist, thereby obtaining a new class of examples of extended homotopy quantum field theories (Definition~\ref{defgendwtheory}) and, by evaluation of those on the circle, a new class of equivariant modular tensor categories (Theorem~\ref{Theorem: Equivariant category on S1}) which can be seen as twisted versions of the equivariant modular categories found in \cite{maiernikolausschweigerteq}.

In Section~\ref{sectransgression} we explore the relation between extended primitive homotopy quantum field theories and transgression. More precisely, we prove that for a cocycle $\theta \in Z^n(BG;\U(1))$ the associated extended field theory gives rise to a 2-line bundle over the groupoid of $G$-bundles on any closed oriented $n-2$-dimensional manifold, which is entirely determined by a transgression of $\theta$ (Theorem~\ref{Theorem: Transgression}).

The extended homotopy quantum field theory constructed in this paper has a direct physical motivation in the sense that it models the higher categorical parallel transport operator of a higher flat gerbe.
Recall that gerbes with connection are higher analogues of $\U(1)$-bundles with connection which provide the mathematical framework for the treatment of Wess-Zumino terms in two-dimensional field theory, see \cite{waldorf} for an introduction. Bundle gerbes with connection on a manifold $M$ are classified by the second hypercohomology $H^2(M;\mathscr{D}_2)$ with coefficients in the second Deligne complex. For each bundle gerbe with connection one can define a field strength, which is a  3-form on $M$; and the bundle gerbes with connection having vanishing field strength, the so-called \emph{flat gerbes}, are classified by $H^2(M;\U(1))$, i.e. by ordinary (singular) cohomology with coefficients in the constant sheaf $\U(1)$. Hence, flat gerbes can be described in purely topological terms.

More generally, we will say that for any topological space $T$ the cohomology $H^n(T;\U(1))$ classifies flat $n-1$-gerbes on $T$, i.e. a flat gerbe on $T$ can be described by a singular cocycle $\theta \in Z^n(T;\U(1))$.

The Wess-Zumino terms for the sigma model of a flat gerbe $\theta \in Z^n(T;\U(1))$ on a space $T$ can now be understood as follows: For a given map $\psi : \Sigma \to T$ from a closed oriented $n$-dimensional manifold $\Sigma$ to $T$ ($\Sigma$ can be understood as a (higher dimensional) worldsheet), the Wess-Zumino term is given  precisely by the number $\langle\psi^* \theta,\mu_\Sigma \rangle$
encountered in \eqref{wzwamplitude},
 i.e.\ by the evaluation of the pullback of $\theta$ along $\psi$ on the fundamental class $\mu_\Sigma$ of $\Sigma$. It is also referred to as the \emph{(higher) holonomy of $\theta$ with respect to $\psi$}. Of course, it is highly desirable to be able to compute the amplitude \eqref{wzwamplitude} by cutting $\Sigma$ into simpler pieces. This locality principle for the holonomy of flat gerbes was made precise in \cite[I.2.1]{turaevhqft} by the statement that a cocycle $\theta \in Z^n(T;\U(1))$ gives rise to a homotopy quantum field theory with target space $T$. The present paper establishes this locality statement one categorical level higher.

The flatness assumption throughout the article ensures that gerbes and their parallel transport can be described purely topologically. A description of non-flat gerbes is much more involved due to smoothness issues. The interpretation of the parallel transport of gerbes in the non-flat case in terms of homotopy quantum field theory has been addressed in the two-dimensional non-extended case in \cite{bunketurnerwillerton}, see also \cite{BunkWaldorf} for the open-closed case. Our results can be seen as a higher categorical, but purely topological version of this approach.

\subsection*{Acknowledgements}
We would like to thank Christoph Schweigert and Richard Szabo for the constant support of this project and helpful discussions. Furthermore, we are grateful to Severin Bunk for useful comments on a draft version of this paper.

LM is supported by the Doctoral Training Grant ST/N509099/1 from the UK Science and Technology Facilities
Council (STFC) and thanks the University of Hamburg and in particular Christoph Schweigert for their hospitality during the time when part of this work was done.
LW is supported by the RTG 1670 ``Mathematics inspired by String theory and Quantum
Field Theory''.

\section{Extended homotopy quantum field theories\label{secexthqft}}
For the notion of a homotopy quantum field theory from \cite{turaevhqft} there exists an extended version. The full definition is given in \cite{extofk} and will be recalled below.

The domain of definition for an extended homotopy quantum field theory with target space $T$ is the symmetric monoidal bordism bicategory $T\text{-}\Cob(n,n-1,n-2)$ of $T$-bordisms.

In order to define $T\text{-}\Cob(n,n-1,n-2)$, the notion of a manifold with corners is needed which we recall now based on \cite[Section~3.1.1]{schommerpries}: An \emph{$n$-dimensional manifold with corners of codimension 2} is defined to be a second countable Hausdorff space $M$ equipped with a maximal atlas of charts 
\begin{align}
M\supseteq U  \stackrel{\varphi}{\to}  V\subset \mathbb{R}^{n-2}\times (\mathbb{R}_{\ge 0})^2.
\end{align}
For a point $x\in M$ the \emph{index} of $x$ is  the number of coordinates of $(\text{pr}_{(\mathbb{R}_{\geq 0})^2}\circ \varphi)(x)$ which are equal to 0 for one chart $\varphi$ (and therefore all charts).
We define a \emph{connected face} of $M$ as the closure of a maximal connected subset of points of index 1, a \emph{face} as the disjoint union of connected faces and \emph{manifold with faces} as a manifold with corners having the property that every point of index 2 is part of precisely two different connected faces.

Lastly, we define an \emph{$n$-dimensional $\langle 2 \rangle $-manifold} as an $n$-dimensional manifold $M$ with faces equipped with a decomposition $\partial M = \partial_0 M \cup \partial_1 M$ of its topological boundary into faces having the property that $\partial_0 M \cap \partial_1 M$ is the set of corners of $M$. Here $\partial_0 M$ is called the 0-boundary of $M$ and $\partial_1 M$ the 1-boundary of $M$.

\begin{definition}\label{defmscbordcattarget}
For $n\ge 2$ and any non-empty topological space $T$, which we will refer to as the \emph{target space}, we define the bicategory $T\text{-}\Cob(n,n-1,n-2)$ as follows: 
	\begin{enumerate}
		\item[(0)] Objects are pairs $(S,\xi)$ consisting of a $n-2$-dimensional oriented closed manifold $S$ and a continuous map $\xi: S \to T$ (hereafter just referred to as map).

		\item[(1)]
		A 1-morphism $(\Sigmait,\varphi) : (S_0,\xi_0) \to (S_1,\xi_1)$ is an oriented compact collared bordism $(\Sigmait,\chi_-,\chi_+) : S_0 \to S_1$ (by this we mean a compact oriented $n-1$-dimensional manifold $\Sigmait$ with boundary equipped with orientation preserving diffeomorphisms $\chi_-\colon S_0\times [0,1) \to \Sigmait_-$ and $\chi_+:  S_1\times (-1,0] \to \Sigmait_+$ with $\Sigmait_- \cup \Sigmait_+$ being a collar of $\partial \Sigmait$),
		and a map $\varphi : \Sigmait \to T$ making the diagram
		\begin{equation}
		\begin{tikzcd}
		\, & \Sigmait \arrow{dd}{\varphi}  & \\
		S_0\times\{0\}  \arrow{ru}{\chi_-} \arrow[swap]{dr}{\xi_0\circ \operatorname{pr}_{S_0}} & & S_1\times \{0\}  \arrow[swap]{lu}{\chi_+} \arrow{dl}{\xi_1\circ \operatorname{pr}_{S_1}} \\
		& T  &
		\end{tikzcd}
		\end{equation}
		commute. No compatibility on the collars is assumed.  We define composition of 1-morphisms by gluing of bordisms along collars and maps. The collars are needed to define a smooth structure on the composition. The identities are  cylinders equipped with the trivial homotopy, where trivial means constant along the cylinder axis. 
		
\begin{figure}[hbt]\centering
\begin{overpic}[width=15.5cm]
	{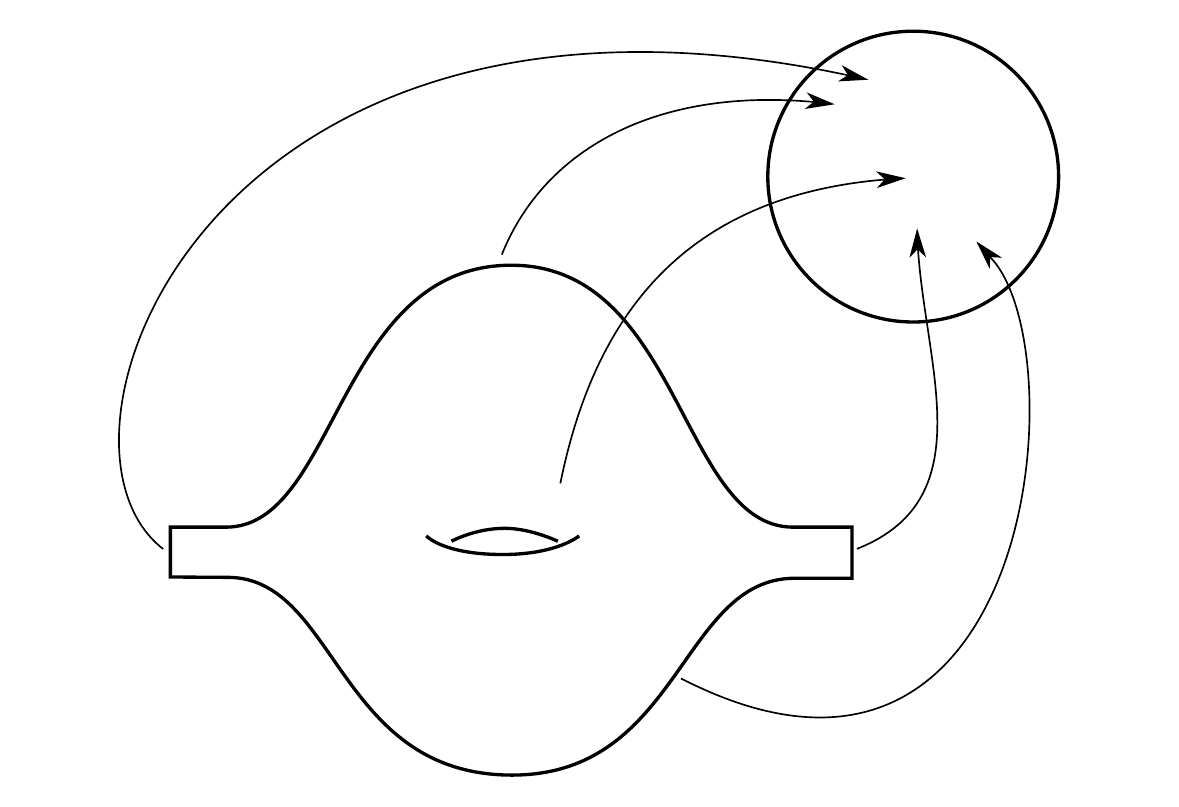}\put(79,53){$T$}\put(40,30){$M$}\put(40,0){$\Sigma_a$}
\put(34,45){$\Sigma_b$}
\put(80,10){$\varphi_a$}\put(40,51){$\varphi_b$}
\put(60,45){$\psi$}\put(0,35){$\xi_0 \times [0,1]$}
\put(68,30){$\xi_1 \times [0,1]$}
\end{overpic}
\caption{Sketch of a 2-morphism.}
\label{Fig:Sketch 2-Morphism}
\end{figure}

		\item[(2)]
		A 2-morphism $(\Sigmait,\varphi) \Longrightarrow (\Sigmait',\varphi')$ between 1-morphisms $(S_0,\xi_0) \to (S_1,\xi_1)$ is defined to be an equivalence class of pairs $(M,\psi)$ consisting of an $n$-dimensional collared compact oriented bordism $M : \Sigmait \to \Sigmait'$ with corners and a map $\psi : M \to T$ (see figure \ref{Fig:Sketch 2-Morphism}). Recall that an $n$-dimensional collared compact oriented bordism is a $\langle 2 \rangle$-manifold $M$ equipped with
		\begin{itemize}
			\item a decomposition of the 0-boundary $\partial_0 M = \partial_0 M_- \cup \partial_0 M_+$ and orientation preserving diffeomorphisms $\delta_- : \Sigmait \times [0,1) \to M_-$ and $\delta_+ : \Sigmait' \times (-1,0] \to M_+$ to collars of this decomposition,
			
			\item a decomposition of the 1-boundary $\partial_1 M = \partial_1 M_- \cup \partial_1 M_+$ and orientation preserving diffeomorphisms $\alpha_- : S_0 \times [0,1)\times [0,1] \to M_-$ and $\alpha_+ : S_1 \times (-1,0]\times [0,1] \to M_+$ to collars of this decomposition
		making for some $ \varepsilon >0$ the diagrams
			\begin{equation}
			\label{Condition Collars 1}
			\begin{tikzcd}
			S_0 \times [0,1)\times [0,\varepsilon) \ar{r}{\alpha_-}  \ar[swap]{rd}{\chi_-\times \id}& M & S_1 \times (-1,0]\times [0,\varepsilon) \ar[swap]{l}{\alpha_+} \ar{ld}{\chi_+\times \id} \\
			& \Sigmait \times [0,\varepsilon) \ar[swap]{u}{\delta_-}&
			\end{tikzcd},
			\end{equation}
			\begin{equation}
			\label{Condition Collars 2}
			\begin{tikzcd}
			\,          & \Sigmait' \times (-\varepsilon,0] \ar{d}{\delta_+}   & \\
			S_0 \times [0,1)\times (1-\varepsilon,1] \ar{r}{\alpha_-} \ar{ru}{\chi'_-\times \id-1}& M & S_1 \times (-1,0]\times (1-\varepsilon,1] \ar[swap]{l}{\alpha_+} \ar[swap]{lu}{\chi'_+\times \id-1}
			\end{tikzcd}
			\end{equation} and
			\[
			\begin{tikzcd}
			\,        & M \ar{dd}{\psi} & \\
			S_0\times [0,1] \sqcup \Sigmait  \ar{ru}{\alpha_- \sqcup \delta_-} \ar[swap]{dr}{\xi_0\circ \text{pr}_{S_0} \sqcup \varphi} & & S_1\times [0,1] \sqcup \Sigmait' \ar[swap]{lu}{\alpha_+ \sqcup \delta_+} \ar{dl}{\xi_1\circ \text{pr}_{S_1} \sqcup \varphi'} \\
			& T &
			\end{tikzcd}
			\]
			 commute. Again no compatibility on the collars is assumed.
		\end{itemize}

		We define two pairs $(M,\psi)$ and $(\widetilde M,\widetilde \psi)$ to be equivalent if we can find an orientation-preserving diffeomorphism $\Phiit : M \to\widetilde M$ such that the diagram
		\[
		\begin{tikzcd}
		\,        & M \ar{dd}{\Phiit} & \\
		\Sigmait \times [0,1)  \ar{ru}{\delta_-} \ar[swap]{dr}{\widetilde \delta_-} & & \Sigmait' \times (-1,0] \ar[swap]{lu}{\delta_+} \ar{dl}{\widetilde \delta_+} \\
		& \widetilde M &
		\end{tikzcd} \ ,
		\]
		and a similar diagram involving the collars of the 1-boundary commute, and if moreover $\psi = \widetilde \psi \circ \Phiit$ holds.

\end{enumerate}
In order to define the vertical composition of 2-morphisms, we fix a diffeomorphism $[0,2]\to [0,1]$ which is equal to the identity on a neighborhood of $0$ and given by $x\mapsto x-1$ in a neighborhood of $2$.
Now we define the vertical composition by gluing using the collars of 0-boundaries. We use our fixed diffeomorphism to rescale both the ingoing and outgoing 1-collars. 

We define horizontal composition of 2-morphisms by gluing manifolds and maps along 1-boundaries, where the new 0-collars arise from the old ones by restricting them to a small 
enough neighbourhood of the boundary such that \eqref{Condition Collars 1} and \eqref{Condition Collars 2} allow us to glue them along the boundary and then rescaling the interval.
By disjoint union the structure of a symmetric monoidal bicategory with duals on $T\text{-}\Cob(n,n-1,n-2)$ is obtained.
\end{definition}

\begin{definition}\label{defhqftext}
	An \emph{$n$-dimensional extended homotopy quantum field theory with target space $T$ and values in a symmetric monoidal bicategory $\mathcal{S}$} is defined to be a symmetric monoidal functor $Z: T\text{-}\Cob(n,n-1,n-2) \to \mathcal{S}$ fulfilling the property of \emph{homotopy invariance}: Given 2-morphisms $(M,\psi), (M,\psi') : (\Sigmait_a,\varphi_a) \Longrightarrow (\Sigmait_b,\varphi_b)$ between 1-morphisms $(\Sigmait_a,\varphi_a) , (\Sigmait_b,\varphi_b) : (S_0,\xi_0) \to (S_1,\xi_1)$ such that $\psi$ is homotopic to $\psi'$ relative the boundary of $M$ we require that
\begin{equation}
\begin{tikzcd}
\, & \ \ \ \ \ \  \ar[Rightarrow, swap, shorten <= 8,shorten >= 8]{dd}{Z(M,\psi)} &  \\
Z(S_0,\xi_0) \ar[bend left=50]{rr}{Z(\Sigmait_a,\varphi_a)} \ar[bend right=50, swap]{rr}{Z(\Sigmait_b,\varphi_b)}  & & Z(S_1,\xi_1) \\
\, & \, &
\end{tikzcd} 
=
\begin{tikzcd}
\, & \ \ \ \ \ \  \ar[Rightarrow, swap, shorten <= 8,shorten >= 8]{dd}{Z(M,\psi')} &  \\
Z(S_0,\xi_0) \ar[bend left=50]{rr}{Z(\Sigmait_a,\varphi_a)} \ar[bend right=50, swap]{rr}{Z(\Sigmait_b,\varphi_b)}  & & Z(S_1,\xi_1) \\
\, & \, &
\end{tikzcd} \ .
	\end{equation}
	
An extended homotopy quantum field theory $Z: T\text{-}\Cob(n,n-1,n-2) \to \mathcal{S}$ is called \emph{invertible} if it takes values in the sub-2-groupoid of $\mathcal{S}$ containing the objects, 1-morphisms and 2-morphisms of $\mathcal{S}$ which are invertible with respect to the tensor product, i.e. if it takes values in the Picard sub-2-groupoid of $\mathcal{S}$. 
\end{definition}

\begin{remark}
In this article we use the 2-category of Kapranov–Voevodsky (KV) 2-vector spaces $\Tvs$ \cite{KV} as codomain of our field theories. The codomain will also be referred to as the \emph{coefficients}. For a discussion of alternative targets see \cite[Appendix A]{BDSPV15}. The most common coefficients for extended field theories contain $\Tvs$ as a sub-2-category.
The objects of $\Tvs$ are KV 2-vector spaces, i.e. semi-simple abelian $\fvs$-module categories with a finite number of isomorphism classes of simple objects. We denote the action of a finite-dimensional complex vector space $V$ on a 2-vector space $\mathcal{X}$ by $V*? \colon \mathcal{X}\rightarrow \mathcal{X}$. The 1-morphisms are linear functors, i.e. $\fvs$-module functors, and the 2-morphisms are natural transformations. 
The Deligne product $\boxtimes$ makes $\Tvs$ into a symmetric monoidal bicategory. 
Note that every KV 2-vector space is equivalent to $(\fvs)^n$ for some integer $n \ge 0$. Using this equivalence we can express a linear functor up to natural isomorphism as a matrix with vector spaces as entries. A natural transformation can be described as a matrix with linear maps as entries \cite{Mor11}.  
\end{remark}

\section{Construction of the parallel transport homotopy quantum field theory\label{secconstruchqft}}
In this section we state our main result as Theorem~\ref{mainthm}: To a flat $n-1$-gerbe on a topological space $T$ represented by a singular cocycle $\theta \in Z^n(T;\U(1))$ we associate
an extended homotopy quantum field theory
\begin{align} T_\theta : \TCob \to \TwoVect \end{align} which can be understood as the parallel transport operator of $\theta$. The evaluation of $T_\theta$ on an $n$-dimensional closed oriented manifold $M$ together with a map to $\psi : M \to T$ yields an element in $\U(1)$, namely the holonomy of $\theta$ with respect to $\psi$.  

\subsection{Definition on objects}
Consider an object $(S,\xi)$ in $\TCob$, i.e. $S$ is a closed oriented $n-2$-dimensional manifold and $\xi : S \to T$ a continuous map.
The extended homotopy quantum field theory $T_\theta$ assigns to $(S,\xi)$ a 2-vector space $T_\theta(S,\xi)$ that we are going to define in this subsection.

To this end, denote by $\Fund(S)$ the groupoid of fundamental cycles of $S$. Its objects are fundamental cycles of $S$, i.e. those elements of $Z_{n-2}(S)$ representing the fundamental class in $H_{n-2}(S)$. A morphism $\sigma \to \sigma'$ between two fundamental cycles is an $n-1$-chain $\tau$ such that $\partial \tau = \sigma'-\sigma$. Composition is given by addition of $n-1$-chains.

The objects in $T_\theta(S,\xi)$ are generated via biproduct from the object set of $\Fund (S)$, i.e. formal finite sums $\bigoplus_{i=1}^n V_i* \sigma_i$, where the $V_i$ are finite-dimensional complex vector spaces and the $\sigma_i$ are objects in $\Fund (S)$. 
We write $\sigma$ for $\C * \sigma$. 
The space of morphisms between $\sigma,\sigma' \in \Fund(S)$ seen as objects of $T_\theta(S,\xi)$ is given by
\begin{align} 
\Hom_{T_\theta(S,\xi)}(\sigma,\sigma') := \frac{\mathbb{C}[\Hom_{\Fund(S)}(\sigma,\sigma')]}{\sim},      \label{defmorphismon0cells}
\end{align} 
where $\mathbb{C}[\Hom_{\Fund(S)}(\sigma,\sigma')]$ is the free complex vector space on the set $\Hom_{\Fund(S)}(\sigma,\sigma')$, and for two morphisms $\tau,\widetilde \tau : \sigma \to \sigma'$ we make the identification
\begin{align} \widetilde \tau \sim \langle \xi^* \theta,\lambda\rangle \tau\ ,  \label{defmorphismon0cells2} \end{align} whenever $\widetilde \tau-\tau=\partial \lambda$ for some $\lambda \in C_n(S)$. By angular brackets we denote the evaluation of cochains on chains. 
Note that in \eqref{defmorphismon0cells2} the choice of $\lambda$ does not matter. 
In order to obtain the morphism spaces between all objects in $T_\theta(S,\xi)$, \eqref{defmorphismon0cells} has to be extended bilinearly, i.e.
\begin{align}
\Hom_{T_\theta(S,\xi)} \left( \bigoplus_{i=1}^n V_i * \sigma_i\ ,\ \bigoplus_{j=1}^m V_j * \sigma_j \right) = \bigoplus_{i,j} \Hom(V_i,V_j) \otimes \Hom_{T_\theta(S,\xi)} (\sigma_i, \sigma_j)
\end{align} for all formal finite sums. 
Composition is defined by matrix multiplication and composition in $\Fund (S)$. 

The $\fvs$-module structure is given by
\begin{align}
* \colon \fvs\times T_{\theta}(S,\xi) &\longrightarrow T_{\theta}(S,\xi) \\
 V\times \left(\bigoplus_{i=1}^n V_i * \sigma_i\right) & \mapsto \left(\bigoplus_{i=1}^n (V\otimes V_i) * \sigma_i\right) \ .  
\end{align}
It is now easy to see that $T_\theta(S,\xi)$ is a 2-vector space with one simple object, i.e. a 2-line.

\subsection{Definition on 1-morphisms}
Let $(\Sigmait,\varphi) : (S_0,\xi_0) \to (S_1,\xi_1)$ be a 1-morphism in $\TCob$. Again, we denote by $\Fund(\Sigmait)$ the groupoid of fundamental cycles of $\Sigmait$, i.e. the groupoid of relative cycles in $C_{n-1}(\Sigmait)$ representing the fundamental class of $\Sigmait$ in $H_{n-1}(\Sigmait,\partial \Sigmait)$. For fundamental cycles $\sigma_0$ and $\sigma_1$ of $S_0$ and $S_1$, respectively, we denote by $\Fund_{\sigma_0}^{\sigma_1}(\Sigmait)$ the subgroupoid of $\Fund(\Sigmait)$ spanned by all fundamental cycles $\mu$ of $\Sigmait$ with $\partial \mu = \sigma_1-\sigma_0$. Here we suppress the inclusion of the ingoing and outgoing boundary into $\Sigmait$ in the notation. By \cite[VI.,~Lemma~9.1]{bredon} the groupoid $\Fund_{\sigma_0}^{\sigma_1}(\Sigmait)$ is non-empty and connected. 

In order to define the 2-linear map $T_\theta(\Sigmait,\varphi) : T_\theta (S_0,\xi_0) \to T_\theta(S_1,\xi_1)$ we define on the free vector space $\mathbb{C}[\Fund_{\sigma_0}^{\sigma_1}(\Sigmait)]$ the equivalence relation
\begin{align}
\mu' \sim \langle\varphi^* \theta ,\nu\rangle \mu 
\end{align}
 for any $\nu \in C_n(\Sigmait)$ such that $\partial \nu =\mu-\mu'$. We use the notation
\begin{align}
\Sigma^\varphi \spr{\sigma_1, \sigma_0} := \frac{\mathbb{C}[\Fund_{\sigma_0}^{\sigma_1}(\Sigmait)]}{\sim}
\end{align} for the quotient and observe that $\Sigma^\varphi \spr{?,\sigma_0}$ extends to a functor $\Fund^{\opp}(S_1)\to \fvs$, which is
defined on a morphism $\lambda : \sigma_1 \to \sigma_2$ in $\Fund(S_1)$ by 
\begin{align}
\Sigma^\varphi \spr{?,\sigma_0} (\lambda):  \Sigma^\varphi \spr{\sigma_2,\sigma_0} & \to \Sigma^\varphi \spr{\sigma_1,\sigma_0}   \\
\mu &\mapsto \mu- \lambda \ .
\end{align}
A straightforward calculation shows that this is well-defined.

Now $T_\theta(\Sigmait,\varphi) : T_\theta (S_0,\xi_0) \to T_\theta(S_1,\xi_1)$ is defined on objects by the coend
\begin{align} 
T_\theta(\Sigmait,\varphi) \sigma_0 := \int^{\sigma_1 \in \Fund(S_1)} \Sigmait^\varphi \spr{\sigma_1,\sigma_0} * \sigma_1 
\end{align} 
and linear extension. For the necessary background on coends we refer to \cite[Chapter IX.6]{MacLane}.
Here, the coend can be replaced by an end, since it is taken over a groupoid and limits and colimits over essentially finite groupoids taken in a 2-vector space coincide.

Before completing the definition of $T_\theta$ we already show that it respects the composition of 1-morphisms up to coherent natural isomorphism. This is a crucial part of the 2-functoriality of $T_\theta$. For the proof we will need the following Gluing Lemma. It is a special case of the Gluing Lemma for $\langle 2\rangle$-manifolds, which will appear as Lemma~\ref{lemmagluing2man} below, hence we omit the proof. In the statement we suppress the inclusions of boundary components in the notation.

\begin{lemma}[Gluing Lemma for manifolds with boundary]\label{lemmamscgluingcyclescob}
	Let $\Sigmait_a \colon S_0 \to S_1$ and $\Sigmait_b \colon S_1 \to S_2$ be 1-morphisms in $\Cob(n,n-1,n-2)$, and let $\nu \in C_{n-1}(\Sigmait_a)$ and $\nu' \in C_{n-1}(\Sigmait_b)$ be fundamental cycles with $\partial \nu =  \sigma_1 -  \sigma_0$ and $\partial \nu' =  \sigma_2 -  \sigma_1$ for fixed fundamental cycles $\sigma_j \in Z_{n-2}(S_j)$, $j=0,1,2$. Then by $\nu'\circ \nu:= \nu'+\nu \in  C_{n-1}(\Sigmait_b \circ \Sigmait_a) $ we get a fundamental cycle of $\Sigmait_b \circ \Sigmait_a$ satisfying $\partial (\nu'\circ \nu) =  \sigma_2 -  \sigma_0$.
\end{lemma}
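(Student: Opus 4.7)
The boundary computation is immediate from additivity of $\partial$: regarding $\nu$ and $\nu'$ as singular chains in $\Sigma_b \circ \Sigma_a$ via the evident inclusions, one has $\partial(\nu+\nu')=(\sigma_1-\sigma_0)+(\sigma_2-\sigma_1)=\sigma_2-\sigma_0$, which is supported on $\partial(\Sigma_b\circ\Sigma_a)=S_0\sqcup S_2$. So $\nu'\circ\nu$ defines a class in $H_{n-1}(\Sigma_b\circ\Sigma_a,\partial(\Sigma_b\circ\Sigma_a))$ with the correct boundary, and the only non-trivial point is that this class is the fundamental class.

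My plan for that part is to use the local characterization of the fundamental class: for a connected compact oriented $(n-1)$-manifold $M$ with boundary, a class $\mu\in H_{n-1}(M,\partial M)$ is the fundamental class iff its image under the ``restriction at $x$'' map
\[
H_{n-1}(M,\partial M)\longrightarrow H_{n-1}(M,M\setminus\{x\})\cong \Z
\]
is the local orientation class at some interior point $x$ (see \cite[VI.~\S 7--8]{bredon}). Pick an interior point $x$ of $\Sigma_b\circ \Sigma_a$ lying in $\Sigma_a$ but away from $S_1$. By excision (excising $\Sigma_b$ minus a small collar around $S_1$), the restriction map fits into a commuting square
\[
\begin{tikzcd}
H_{n-1}(\Sigma_a,\partial \Sigma_a) \ar{r}\ar{d} & H_{n-1}(\Sigma_b\circ\Sigma_a,\partial(\Sigma_b\circ\Sigma_a))\ar{d}\\
H_{n-1}(\Sigma_a,\Sigma_a\setminus\{x\}) \ar{r}{\cong} & H_{n-1}(\Sigma_b\circ\Sigma_a,(\Sigma_b\circ\Sigma_a)\setminus\{x\})
\end{tikzcd}
\]
in which the bottom horizontal is an isomorphism that matches local orientations, because the orientation of $\Sigma_a$ is the one inherited from $\Sigma_b\circ\Sigma_a$. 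Since $\nu$ is a fundamental cycle of $\Sigma_a$, its image under the left-then-bottom composite is the local orientation at $x$ in $\Sigma_b\circ\Sigma_a$. On the other hand, $\nu'$ is supported in $\Sigma_b\subset (\Sigma_b\circ\Sigma_a)\setminus\{x\}$, so its image in $H_{n-1}(\Sigma_b\circ\Sigma_a,(\Sigma_b\circ\Sigma_a)\setminus\{x\})$ is zero. Adding, $[\nu+\nu']$ restricts to the local orientation class at $x$, hence is the fundamental class on the connected component of $\Sigma_b\circ\Sigma_a$ containing $x$. Since every connected component of $\Sigma_b\circ\Sigma_a$ meets either $\Sigma_a$ or $\Sigma_b$ in its interior (away from $S_1$), repeating this argument componentwise gives the full statement.

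The main obstacle I anticipate is the bookkeeping around the collars: the composed bordism $\Sigma_b\circ\Sigma_a$ is built by identifying a neighborhood of $S_1\subset \Sigma_a$ with a neighborhood of $S_1\subset \Sigma_b$, and one must check that under this identification the images of $\nu$ and $\nu'$ in $C_{n-1}(\Sigma_b\circ\Sigma_a)$ genuinely add to a chain whose local class at interior points of $\Sigma_a$ or $\Sigma_b$ is computed as above. This is harmless but slightly tedious: one may, if preferred, replace $\nu$ and $\nu'$ by homologous representatives supported off a thin collar of $S_1$ (such representatives exist because small perturbations of cycles are homologous, and such a perturbation can always be achieved by a subdivision/chain-homotopy argument). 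With that normalization the excision argument applies verbatim, and the orientation compatibility is exactly the statement that the identification of collars in the definition of the composition $\Sigma_b\circ\Sigma_a$ is orientation preserving.
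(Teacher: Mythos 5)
Your argument is correct, but it identifies the fundamental class by a different mechanism than the paper does. The paper does not prove this lemma directly; it derives it as a special case of the Gluing Lemma for $\langle 2\rangle$-manifolds (Lemma~\ref{lemmagluing2man}), whose proof is global: it runs the long exact sequence of the triple $\partial M \subset \partial M \cup \Sigma \subset M$, uses excision and collar retractions to identify the outer terms with $H_*(\Sigma,\partial\Sigma)$ and $H_*(M_0,\partial M_0)\oplus H_*(M_1,\partial M_1)$, and reads off that $H_n(M,\partial M)$ is exactly the kernel of the boundary map, generated by $[\nu_1]\oplus[\nu_2]$; only at the very end does it make a local check that the resulting generator is the correct orientation. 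You instead take the existence and pointwise characterization of the fundamental class as given and verify the local condition at a single interior point $x$ of $\Sigma_a$, using excision to transport the local orientation and the observation that $\nu'$ dies in $H_{n-1}\bigl(\Sigma_b\circ\Sigma_a,(\Sigma_b\circ\Sigma_a)\setminus\{x\}\bigr)$. Your route is shorter and more elementary, and it cleanly isolates the orientation-compatibility issue that the paper only addresses in its last sentence; the paper's route buys more, namely an independent computation of $H_n(M,\partial M)$ together with an explicit generator, and it is stated at the level of generality ($\langle 2\rangle$-manifolds, gluing along a $1$-boundary face) actually needed later for vertical composition of $2$-morphisms, where the glued-along piece $\Sigma$ itself has boundary and your ``supported in $\partial(\Sigma_b\circ\Sigma_a)$'' bookkeeping becomes the more delicate relation $\partial\mu_{i,0}=-\partial\mu_{i,1}$. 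Your componentwise reduction and the collar normalization you flag are both sound and need no further repair for the lemma as stated.
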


\begin{lemma}\label{lemmacompof1mor}
	$T_\theta$ respects the composition of 1-morphisms up to coherent natural isomorphism.
	\end{lemma}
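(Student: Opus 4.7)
The plan is to construct, for composable 1-morphisms $(\Sigma_a,\varphi_a):(S_0,\xi_0)\to(S_1,\xi_1)$ and $(\Sigma_b,\varphi_b):(S_1,\xi_1)\to(S_2,\xi_2)$, a natural isomorphism
\begin{align}
\Phi_{a,b}\colon T_\theta(\Sigma_b,\varphi_b)\circ T_\theta(\Sigma_a,\varphi_a)\Longrightarrow T_\theta(\Sigma_b\circ\Sigma_a,\varphi_b\cup\varphi_a),
\end{align}
and then verify its coherence. Since each $T_\theta(\Sigma,\varphi)$ is a 2-linear functor, it preserves coends and commutes with the $\fvs$-action, so a Fubini-type computation reduces the construction of $\Phi_{a,b}$ to exhibiting, naturally in $\sigma_0\in\Fund(S_0)$ and $\sigma_2\in\Fund(S_2)$, a vector space isomorphism
\begin{align}
\int^{\sigma_1\in\Fund(S_1)}\Sigma_a\spr{\sigma_1,\sigma_0}\otimes\Sigma_b\spr{\sigma_2,\sigma_1}\;\xrightarrow{\;\cong\;}\;(\Sigma_b\circ\Sigma_a)\spr{\sigma_2,\sigma_0}.
\end{align}

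For the forward direction the Gluing Lemma~\ref{lemmamscgluingcyclescob} dictates the assignment $[\nu]\otimes[\nu']\mapsto[\nu+\nu']$, which indeed represents a fundamental cycle of $\Sigma_b\circ\Sigma_a$ with boundary $\sigma_2-\sigma_0$. I would check three well-definedness issues. First, the relation $\widetilde\nu\sim\langle\varphi_a^*\theta,\lambda\rangle\nu$ for $\lambda\in C_n(\Sigma_a)$ descends to the target, because $\lambda$ extends by inclusion into $C_n(\Sigma_b\circ\Sigma_a)$ and $(\varphi_b\cup\varphi_a)^*\theta$ pulls back to $\varphi_a^*\theta$ on that subspace. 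Second, the analogous compatibility for the $\Sigma_b$-factor. Third, the dinaturality identity: for any morphism $\rho\colon\sigma_1\to\sigma_1'$ in $\Fund(S_1)$ the two images $(\nu-\rho)+\nu'$ and $\nu+(\nu'-\rho)$ coincide already at the chain level, so $\Phi_{a,b}$ descends from the dinatural wedge to the coend.

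The main obstacle is producing the inverse: one must argue that every fundamental cycle $\mu$ of $\Sigma_b\circ\Sigma_a$ is equivalent, modulo the relation defined by $\langle(\varphi_b\cup\varphi_a)^*\theta,-\rangle$, to a sum $\nu+\nu'$ with $\nu\in\Fund_{\sigma_0}^{\sigma_1}(\Sigma_a)$ and $\nu'\in\Fund_{\sigma_1}^{\sigma_2}(\Sigma_b)$ for some fundamental cycle $\sigma_1$ of $S_1$. My approach is to fix a smooth triangulation of $\Sigma_b\circ\Sigma_a$ subordinate to the decomposition $\Sigma_a\cup_{S_1}\Sigma_b$; a standard barycentric subdivision argument changes $\mu$ only by an $n$-boundary (and hence by a controlled scalar) and reduces to the case $\mu=\mu_a+\mu_b$ with $\mu_a\in C_{n-1}(\Sigma_a)$ and $\mu_b\in C_{n-1}(\Sigma_b)$. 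The equation $\partial\mu=\sigma_2-\sigma_0$ then forces $\partial\mu_a=\sigma_1-\sigma_0$ and $\partial\mu_b=\sigma_2-\sigma_1$ for a chain $\sigma_1\in C_{n-2}(S_1)$; a short exact sequence argument on $(\Sigma_b\circ\Sigma_a,S_0\sqcup S_2)$, together with the known fundamental class of $\mu$, identifies $\sigma_1$ as a fundamental cycle of $S_1$. Any other splitting differs from this one by $\mu_a\mapsto\mu_a-\rho,\ \mu_b\mapsto\mu_b+\rho$ for some $\rho\in C_{n-1}(S_1)$, which is precisely the dinaturality relation in the coend. This defines the inverse of $\Phi_{a,b}$.

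Finally, I would verify coherence. Naturality in 2-morphisms follows from the bilinearity of $\langle\theta,-\rangle$ together with the $\langle 2\rangle$-manifold version of the Gluing Lemma. The pentagon axiom for three composable 1-morphisms reduces to the strict associativity of chain addition $(\nu+\nu')+\nu''=\nu+(\nu'+\nu'')$ combined with the coherence of $\boxtimes$ in $\Tvs$; all the relevant structure isomorphisms act as the identity on chain representatives, so the pentagon commutes on the nose. The unitors are checked by direct evaluation on cylinder 1-morphisms, whose fundamental cycles glue trivially by the Gluing Lemma.
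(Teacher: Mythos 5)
Your overall architecture coincides with the paper's: both reduce via Fubini for coends to identifying the inner coend $\int^{\sigma_1\in\Fund(S_1)}\Sigma_a^{\varphi_a}\spr{\sigma_1,\sigma_0}\otimes\Sigma_b^{\varphi_b}\spr{\sigma_2,\sigma_1}$ with $(\Sigma_b\circ\Sigma_a)^{\varphi_b\cup\varphi_a}\spr{\sigma_2,\sigma_0}$, and both use Lemma~\ref{lemmamscgluingcyclescob} to define the forward map $\mu_1\otimes\mu_2\mapsto\mu_1+\mu_2$. The divergence is in how invertibility is established. The paper never constructs an inverse: it observes that for each fixed $\sigma_1$ the gluing map is a well-defined nonzero map between one-dimensional spaces (each $\Sigma^{\varphi}\spr{\cdot,\cdot}$ is a line, since $\Fund_{\sigma_0}^{\sigma_1}$ is nonempty and connected and all its morphisms act by scalars in the quotient), hence automatically an isomorphism; after this identification the integrand becomes a constant functor on the connected groupoid $\Fund(S_1)$, so the inner coend collapses to $(\Sigma_b\circ\Sigma_a)^{\varphi_b\cup\varphi_a}\spr{\sigma_2,\sigma_0}$. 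This soft dimension-plus-connectedness argument makes the ``main obstacle'' you identify a non-issue.

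Your explicit inverse, besides being unnecessary, contains an inaccuracy as stated: barycentric subdivision does not change a relative fundamental cycle ``only by an $n$-boundary''. The chain homotopy gives $\mathrm{sd}(\mu)-\mu=\partial D\mu+D\partial\mu$, and $D\partial\mu$ is a nonzero $(n-1)$-chain supported in $S_0\sqcup S_2$, so the subdivided representative has boundary $\mathrm{sd}(\sigma_2)-\mathrm{sd}(\sigma_0)$ rather than the prescribed $\sigma_2-\sigma_0$; producing a splitting adapted to the original boundary cycles needs a further correction step. This is repairable, but the paper's route avoids it entirely. Separately, your treatment of the unit coherence is thinner than the paper's: the isomorphism $\id_{T_\theta(S,\xi)}\Rightarrow T_\theta(\id_{(S,\xi)})$ is obtained there by writing the identity functor as a coend via the enriched co-Yoneda lemma and comparing with $(S\times[0,1])^{\xi\times[0,1]}\spr{\sigma,\sigma_0}$ through $\mu\mapsto-{p_S}_*\mu$; the remark that ``fundamental cycles glue trivially on cylinders'' does not by itself produce this isomorphism.
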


\begin{proof}
The coherence isomorphisms consist of natural isomorphisms 
\begin{align}\label{eqnnatisocohuni}
\Phiit_{(S,\xi)}\colon \id_{T_\theta(S,\xi)} \Longrightarrow T_\theta(\id_{S,\xi}) 
\end{align}
for all objects $(S,\xi)\in \TCob$ and \begin{align}
\Phiit_{(\Sigmait_a,\varphi_a),(\Sigmait_b,\varphi_b)}\colon T_\theta(\Sigmait_b,\varphi_b) \circ T_\theta(\Sigmait_a,\varphi_a) \Longrightarrow T_\theta((\Sigmait_b,\varphi_b)\circ (\Sigmait_a,\varphi_a))\label{eqnnatisocohasso}
\end{align}
for all composable 1-morphisms $(\Sigmait_a,\varphi_a) : (S_0,\xi_0) \to (S_1,\xi_1)$ and $(\Sigmait_b,\varphi_b) : (S_1,\xi_1) \to (S_2,\xi_2)$ in $\TCob$.

Using the enriched co-Yoneda lemma we can write the identity as the coend
\begin{align}
\id_{T_\theta(S,\xi)}(?) \cong \int^{\sigma \in {T_\theta(S,\xi)}} \Hom_{T_\theta(S,\xi)} (\sigma , ?)* \sigma .
\end{align} 
Without loss of generality, we can evaluate this at a generator $\sigma_0\in \Fund(S_1)$
\begin{align}
\sigma_0 \cong \int^{\sigma \in {T_\theta(S,\xi)}} \Hom _{T_\theta(S,\xi)} ( \sigma , \sigma_0)* \sigma \cong \int^{\sigma \in {\Fund(S)}} \Hom _{T_\theta(S,\xi)}( \sigma , \sigma_0)* \sigma.\label{cohid1eqn}
\end{align}  
On the other hand, we have
\begin{align}
T_\theta(\id_{S,\xi})(\sigma_0) = \int ^{\sigma \in {\Fund(S)}}  (S\times [0,1])^{ \xi \times [0,1]}  (\sigma , \sigma_0)* .  \sigma \label{cohid2eqn}
\end{align}
There is a natural isomorphism
\begin{align}
(S\times [0,1])^{ \xi \times [0,1]} (\sigma , \sigma_0) & \to \Hom ( \sigma , \sigma_0) \\
 \mu & \mapsto - {p_S}_* \mu 
\end{align}
using the projection $p_S:S \times [0,1]\to S$. It induces an isomorphism  between the coends in \eqref{cohid1eqn} and \eqref{cohid2eqn} and gives us the desired isomorphism \eqref{eqnnatisocohuni}.

To specify the natural isomorphism \eqref{eqnnatisocohasso} for 1-morphisms $(\Sigmait_a,\varphi_a) : (S_0,\xi_0) \to (S_1,\xi_1)$ and $(\Sigmait_b,\varphi_b) : (S_1,\xi_1) \to (S_2,\xi_2)$ in $\TCob$, we note that for a generator $\sigma_0 \in \Fund(S_0)$
\begin{align}
(T_\theta(\Sigmait_b,\varphi_b) \circ T_\theta(\Sigmait_a,\varphi_a)) \sigma_0 &= \int^{(\sigma_1,\sigma_2) \in \Fund (S_1)\times \Fund (S_2)}  \Sigmait^{\varphi_a}_a\spr{ {\sigma_1},{\sigma_0} } \otimes \Sigmait_b^{\varphi_b}\spr{ {\sigma_2},{\sigma_1}}  * \sigma_2 \  \\ &\cong \int^{\sigma_2 \in \Fund (S_2)} \left(\int^{\sigma_1 \in \Fund (S_1)}  \Sigmait^{\varphi_a}_a \spr{ {\sigma_1},{\sigma_0} } \otimes \Sigmait_b^{\varphi_b}\spr{ {\sigma_2},{\sigma_1}} \right) * \sigma_2,\label{eqnrefinnercoend}
\end{align} where we have used Fubini's Theorem for coends. 
The action of a morphism $\lambda_1 :  \sigma_1 \to \sigma'_1 \in \Fund (S_1)$ in the covariant component is induced by
\begin{align}
\Sigmait_b^{\varphi_b}\spr{\sigma_2, \sigma_1} & \to \Sigmait_b^{\varphi_b} \spr{\sigma_2,\sigma'_1} \\
\mu & \mapsto \mu - \lambda_1.
\end{align}
To compute the inner coend in \eqref{eqnrefinnercoend}, we observe that 
\begin{align}
\Phiit_{\sigma_1,\sigma_2} :  \Sigmait_a^{\varphi_a} \spr{\sigma_0,\sigma_1}\otimes \Sigmait_b^{\varphi_b}\spr{\sigma_1,\sigma_2} & \to (\Sigmait_b \circ \Sigmait_a)^{\varphi_b \cup \varphi_a} \spr{ \sigma_2,\sigma_0}  \\
\mu_1\otimes \mu_2 & \mapsto \mu_1 +\mu_2 \ .
\end{align} is a canonical isomorphism by Lemma~\ref{lemmamscgluingcyclescob}. Here $\varphi_b \cup \varphi_a : \Sigmait_b \circ \Sigmait_a\to T$ is the map obtained from gluing $\varphi_a$ and $\varphi_b$. 
We now obtain
\begin{align} \int^{\sigma_1 \in \Fund (S_1)} \Sigmait_a^{\varphi_a} \spr{ {\sigma_1},{\sigma_0} } \otimes \Sigmait_b^{\varphi_b}\spr{{\sigma_2},{\sigma_1}} &\cong \int^{\sigma_1 \in \Fund (S_1)} (\Sigmait_b \circ \Sigmait_a)^{\varphi_b \cup \varphi_a} \spr{ \sigma_2,\sigma_0}  \\&\cong  \int^{\sigma_1 \in \Fund (S_1)} (\Sigmait_b \circ \Sigmait_a)^{\varphi_b \cup \varphi_a} \spr{ \sigma_2,\sigma_0} \otimes \mathbb{C}  \\&\cong (\Sigmait_b \circ \Sigmait_a)^{\varphi_b \cup \varphi_a} \spr{ \sigma_2,\sigma_0}, 
\end{align} 
where in the last step we used that $\Fund (S_1)$ is connected. Insertion into \eqref{eqnrefinnercoend} yields isomorphisms
\begin{align}
(T_\theta(\Sigmait_b,\varphi_b) \circ T_\theta(\Sigmait_a,\varphi_a)) \sigma_0 \cong (T_\theta( (\Sigmait_b,\varphi_b) \circ (\Sigmait_a,\varphi_a)) \sigma_0,
\end{align} which give us after linear extension the natural isomorphism \eqref{eqnnatisocohasso}.
\end{proof}

\subsection{Definition on 2-morphisms}
To a 2-morphism $(M,\psi): (\Sigmait_a,\varphi_a) \Longrightarrow (\Sigmait_b,\varphi_b)$ between 1-morphisms $(\Sigmait_a,\varphi_a) , (\Sigmait_b,\varphi_b) : (S_0,\xi_0) \to (S_1,\xi_1)$ in $\TCob$ we assign the 2-morphism $T_\theta(\Sigmait_a,\varphi_a) \Longrightarrow T_\theta(\Sigmait_b,\varphi_b)$ between the 1-morphisms $T_\theta(\Sigmait_a,\varphi_a), T_\theta(\Sigmait_b,\varphi_b) : T_\theta(S_0,\xi_0) \to T_\theta(S_1,\xi_1)$ consisting of the natural maps
\begin{align}
T_\theta(\Sigmait_a,\varphi_a) \sigma_0 \to T_\theta(\Sigmait_b,\varphi_b) \sigma_0
\end{align} for $\sigma_0 \in \Fund(S_0)$ which are the maps between the respective coends induced by the linear maps
\begin{align}
T_\theta(M)_{\sigma_1,\sigma_0} : \Sigma^{\varphi_a}_a(\sigma_1,\sigma_0)  \to\Sigma^{\varphi_b}_b(\sigma_1,\sigma_0) \label{eqnmapbetweencoendsinduce}  
\end{align} defined as follows: For $\mu_a \in \Fund_{\sigma_0}^{\sigma_1}(\Sigmait_a)$  we can find a fundamental cycle 
$\nu$ of $M$ with 

\begin{align}\label{EQ: Condition Fundamental cycle on manifolds with corners}
 \partial \nu = \mu_b - \mu_a +(-1)^{n-2}(\sigma_0\times [0,1]-\sigma_1\times [0,1])
\end{align} 
for some fundamental cycle $\mu_b \in \Fund_{\sigma_0}^{\sigma_1}(\Sigmait_b)$.
Mapping $\mu_a$ to $\langle \psi^* \theta,\nu\rangle [\mu_b]$ yields a well-defined linear map
$\mathbb{C}[\Fund_{\sigma_0}^{\sigma_1}(\Sigmait_a)] \to \Sigma_b^{\varphi_b}(\sigma_1,\sigma_0)$, which descends to $\Sigma_a^{\varphi_a}(\sigma_1,\sigma_0)$ and gives us the needed map \eqref{eqnmapbetweencoendsinduce}. 

In order to prove that $T_\theta$ strictly preserves the vertical composition of 2-morphisms, we need a Gluing Lemma for $\langle 2\rangle$-manifolds:

\begin{lemma}[Gluing Lemma for $\langle 2\rangle$-manifolds]\label{lemmagluing2man}
Consider two $n$-dimensional $\langle 2 \rangle$-manifolds $M_1$ and $M_2$ with representatives for the orientation class $\nu_1$ and $\nu_2$ such that $\partial \nu_i = \mu_{i,0} + \mu_{i,1}$ for $i=1,2$, where $\mu_{i,0}$ and $\mu_{i,1}$ are representatives for the fundamental class of the 0 and 1 boundary, respectively (note that this implies $\partial \mu_{i,0}= -\partial \mu_{i,1}$). 
Now assume we have an orientation reversing diffeomorphism from a connected component $\Sigma$ of, say, the 1 boundary of $M_1$ onto the 1 boundary of $M_2$ compatible with the fundamental cycles picked above, i.e. $\mu_{0,1}|_{\Sigma}= - \mu_{1,1}|_{\Sigma}$. Then $\nu_1+\nu_2$ is a representative of the fundamental class of the manifold obtained by gluing $M_1$ and $M_2$ (see figure \ref{Fig:Sketch Composition}) along $\Sigma$. 
\end{lemma}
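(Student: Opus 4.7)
The strategy is to verify that $\nu_1+\nu_2$, regarded as a chain in $C_n(M)$ for $M := M_1 \cup_\Sigma M_2$, satisfies the two characterizing properties of a representative of the fundamental class in $H_n(M, \partial M)$: first, that its boundary is supported on $\partial M$, and second, that its local degree at each interior point of $M$ equals $+1$ with respect to the orientation. Both properties can be checked locally, so one only has to pay attention to what happens across the gluing locus $\Sigma$.

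For the boundary computation, we compute
\begin{align}
\partial(\nu_1+\nu_2) \;=\; (\mu_{1,0}+\mu_{1,1}) + (\mu_{2,0}+\mu_{2,1}).
\end{align}
The contributions of $\mu_{1,1}$ and $\mu_{2,1}$ along the glued component $\Sigma$ cancel by the compatibility hypothesis $\mu_{1,1}|_\Sigma = -\mu_{2,1}|_\Sigma$. What remains is $\mu_{1,0}+\mu_{2,0}$ together with the portions of $\mu_{1,1}$ and $\mu_{2,1}$ that lie on 1-boundary components not being glued, and these altogether sit on $\partial M$. Note that the corners of $M_1$ and $M_2$ lying on $\Sigma$ are absorbed into $\partial_0 M$ of the glued manifold, which is consistent with the cancellation above since $\partial \mu_{i,0} = -\partial \mu_{i,1}$ forces the respective corner chains to match up as well.

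For the local-degree property, interior points of $M$ come in three types. At an interior point $x$ of $M_1$ not lying on $\Sigma$, the local degree of $\nu_1+\nu_2$ equals that of $\nu_1$, which is $+1$ by assumption; the symmetric statement handles points inside $M_2$. The nontrivial case is a point $x$ on (the image of) $\Sigma$, which is now interior in $M$. Choose a bicollar $\Sigma\times(-\varepsilon,\varepsilon)\subset M$ with $\Sigma\times[0,\varepsilon)\subset M_1$ and $\Sigma\times(-\varepsilon,0]\subset M_2$; in the local homology $H_n(M,M\setminus\{x\})$, the class of $\nu_1+\nu_2$ is computed from the contribution of $\nu_1$ on one side and $\nu_2$ on the other, and the compatibility $\mu_{1,1}|_\Sigma=-\mu_{2,1}|_\Sigma$ together with the orientation-reversing gluing diffeomorphism guarantees that these pieces combine coherently (not destructively) to the local orientation class at $x$. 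Hence the local degree is again $+1$.

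\textbf{Main obstacle.} The routine part is the boundary cancellation; the genuine content sits in the local-degree verification at points of $\Sigma$. There one must argue carefully that the chain-level identification along $\Sigma$ induced by the diffeomorphism, together with the sign convention for orientation reversal and the compatibility of the $\mu_{i,1}|_\Sigma$, produces a fundamental cycle of the bicollar $\Sigma\times(-\varepsilon,\varepsilon)$ rather than a cycle whose local contributions cancel. Once this local model is settled, a standard excision/Mayer--Vietoris argument upgrades the pointwise statement to the identity $[\nu_1+\nu_2]=[M]\in H_n(M,\partial M)$.
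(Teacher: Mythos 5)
Your strategy --- show that $\nu_1+\nu_2$ is a relative cycle modulo $\partial M$ and then identify its class in $H_n(M,\partial M)$ by computing local degrees --- is sound and genuinely different from the paper's. The paper instead runs the long exact sequence of the triple $\partial M\subset\partial M\cup\Sigma\subset M$, identifies $H_*(\partial M\cup\Sigma,\partial M)\cong H_*(\Sigma,\partial\Sigma)$ and $H_*(M,\partial M\cup\Sigma)\cong H_*(M_1,\partial M_1)\oplus H_*(M_2,\partial M_2)$ by excision and collar arguments, and reads off that the kernel of the connecting map $[\nu_1]\oplus[\nu_2]\mapsto[\partial\nu_1+\partial\nu_2]$ is exactly $\Z\left([\nu_1]\oplus[\nu_2]\right)$, so that $[\nu_1+\nu_2]$ generates $H_n(M,\partial M)$; the sign is then fixed by comparing orientations at a single point away from $\Sigma$. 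The global argument delivers $H_n(M,\partial M)\cong\Z$ together with its generator in one stroke; your local argument is more elementary, resting only on the characterization of the fundamental class by local degrees and on the boundary cancellation you carried out correctly.

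However, the step you yourself flag as the main obstacle --- the local degree at a point $x\in\Sigma$ --- is asserted rather than proved: ``combine coherently (not destructively)'' is precisely the claim that needs an argument, and as written your text does not contain one. The good news is that you do not need it. For any relative cycle $z$ modulo $\partial M$ in a compact oriented manifold with boundary, the map $H_n(M,\partial M)\to H_n(M,M\setminus\{x\})$ is an isomorphism for every interior point $x$ of a given connected component, so the local degree of $z$ is constant on the interior of each component and is determined by its value at any single point there. Since every component of the interior of $M$ contains points off the codimension-one subset $\Sigma$, your computation at points away from $\Sigma$ already pins down the class, and the delicate bicollar analysis at $\Sigma$ can simply be dropped. (This locally-constant-degree observation is also exactly how the paper fixes the sign at the end of its proof.) With that adjustment your argument closes.
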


\begin{figure}[htb]\centering
\begin{overpic}[width=15.5cm
]{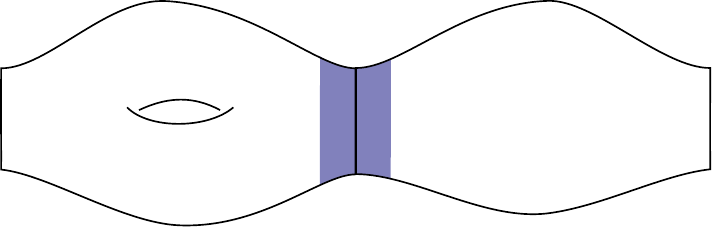}\put(25,20){$M_1$}\put(75,15){$M_2$}\put(50,5){${\color{blue}U}$}\put(50.5,14){$\Sigma$}\end{overpic}\caption{Sketch of the manifolds involved in lemma 2.8.}\label{Fig:Sketch Composition}\end{figure}

\begin{proof}
We denote the  composition of $M_1$ and $M_2$ by $M$.
Obviously, $\nu_1+\nu_2$ is a cycle relative $\partial M$. We have to show that it represents the orientation class of $M$.
To this end, we use the long exact sequence 
\begin{align}\label{eqnles1}
\dots \rightarrow H_n(\partial M \cup \Sigma, \partial M) \rightarrow H_{n}(M,\partial M)\rightarrow H_{n}(M, \partial M \cup \Sigma)\rightarrow H_{n-1}(\partial M \cup \Sigma, \partial M)\rightarrow \dots
\end{align}
in homology associated to the triple $\partial M \subset \partial M \cup \Sigma \subset M$ and compute the relevant terms occurring in it by means of a collar $U \cong \Sigma \times (-1,1)$ in $M$ (the collar exists by \cite[Lemma 2.1.6]{Lau00}; see figure \ref{Fig:Sketch Composition} for a pictorial presentation):
\begin{itemize}
	\item For the computation of $H_*(\partial M \cup \Sigma, \partial M)$ we define $V:= \partial M \cap U$ and find by excision of the complement $W:= \partial M \setminus V$ of $V$ in $\partial M$
	\begin{align}
	H_*(\partial M \cup \Sigma, \partial M) \cong H_*((\partial M \cup \Sigma) \setminus W, \partial M \setminus W). 
	\end{align} Since the inclusion $(\Sigmait,\partial \Sigma) \to ((\partial M \cup \Sigma) \setminus W, \partial M \setminus W)$ is a homotopy equivalence, we arrive at
	\begin{align} H_*(\partial M \cup \Sigma, \partial M) \cong H_*(\Sigma,\partial \Sigma).\label{eqngluing1}\end{align} 
	
	\item For the computation of $H_*(M,\partial M \cup \Sigma)$ we use that the inclusion $(M,\partial M \cup \Sigma) \to (M,\partial M \cup U)$ is a homotopy equivalence. After excising $\Sigma$ in $(M,\partial M \cup U)$ we find
	\begin{align}
	H_*(M,\partial M \cup \Sigma) \cong H_*(M_0 \setminus \Sigma,\partial M_0 \cup U_-) \oplus H_*(M_1 \setminus \Sigma,\partial M_1 \cup U_+),
	\end{align} where $U_-$ and $U_+$ is the image of $\Sigma \times (-1,0)$ and $\Sigma \times (0,1)$ in $U$, respectively. Since the inclusion $(M_0 \setminus \Sigma,\partial M_0 \cup U_-) \to (M_0,\partial M_0 \cup U_- \cup \Sigma)$ induces an isomorphism in homology and since $\Sigmait \to U_-\cup \Sigmait$ is a homotopy equivalence, we obtain $ H_*(M_0 \setminus \Sigma,\partial M_0 \cup U_-)\cong H_*(M_0,\partial M_0)$ and analogously $ H_*(M_1 \setminus \Sigma,\partial M_1 \cup U_+) \cong  H_*(M_1 ,\partial M_1 )$. Thus, we are left with
	\begin{align} H_*(M,\partial M \cup \Sigma) \cong H_*(M_0,\partial M_0) \oplus H_*(M_1 ,\partial M_1 ).\label{eqngluing2}\end{align}

 \end{itemize}
Using \eqref{eqngluing1} and \eqref{eqngluing2} we obtain from \eqref{eqnles1} the exact sequence
\begin{align}
 0 \to H_{n}(M,\partial M)\to H_{n}(M_0, \partial M_0) \oplus H_{n}(M_1, \partial M_1)\to H_{n-1}(\Sigma, \partial \Sigma) \ \    , 
\end{align} where the morphism $H_{n}(M_0, \partial M_0) \oplus H_{n}(M_1, \partial M_1)\to H_{n-1}(\Sigma, \partial \Sigma)$ takes $[\nu_1]\oplus [\nu_2]$ to $[\partial \nu_1 +\partial \nu_2]$. 
Evaluating the kernel of this morphism yields an isomorphism $H_{n}(M,\partial M)\cong \Z ([\nu_1]\oplus [\nu_2])$. This shows that $\nu_0+\nu_1$ is a generator of $H_{n}(M,\partial M)$, i.e. an orientation of $M$. This orientation agrees with the orientation of $M$ in a neighborhood of an arbitrary point away from the gluing boundary, hence they agree.
\end{proof}

\begin{lemma}\label{lemmaverticalcomp}
	$T_\theta$ preserves the vertical composition of 2-morphisms strictly.
\end{lemma}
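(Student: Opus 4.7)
The plan is to fix a representative $\mu_a \in \Fund_{\sigma_0}^{\sigma_1}(\Sigmait_a)$ and evaluate both $T_\theta((M',\psi') \circ_v (M,\psi))$ and $T_\theta(M',\psi') \cdot T_\theta(M,\psi)$ on $[\mu_a]$, then compare. Since both 2-morphisms are built from component linear maps on the spaces $\Sigmait^\varphi(\sigma_1,\sigma_0)$, it suffices to show equality of those component maps on generators.

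First I would unpack the right-hand side. By definition, $T_\theta(M,\psi)_{\sigma_1,\sigma_0}$ picks a fundamental cycle $\nu \in C_n(M)$ satisfying
\begin{align}
\partial \nu = \mu_b - \mu_a + (-1)^{n-2}\bigl(\sigma_0\times [0,1] - \sigma_1\times[0,1]\bigr)
\end{align}
for some $\mu_b \in \Fund_{\sigma_0}^{\sigma_1}(\Sigmait_b)$, and sends $[\mu_a] \mapsto \langle \psi^*\theta,\nu\rangle [\mu_b]$. Applying $T_\theta(M',\psi')_{\sigma_1,\sigma_0}$ to $[\mu_b]$ then picks a fundamental cycle $\nu'\in C_n(M')$ with $\partial \nu' = \mu_c - \mu_b + (-1)^{n-2}(\sigma_0\times[0,1] - \sigma_1\times[0,1])$, so that the composite sends $[\mu_a] \mapsto \langle\psi^*\theta,\nu\rangle \langle\psi'^*\theta,\nu'\rangle [\mu_c]$. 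Independence of all intermediate choices is built into the definition of the quotient $\Sigmait^\varphi(\sigma_1,\sigma_0)$.

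Second, I would invoke Lemma~\ref{lemmagluing2man} with $M_1 = M$, $M_2 = M'$ glued along the 0-boundary component $\Sigmait_b$: the condition $\mu_{1,1}|_{\Sigmait_b}= -\mu_{2,1}|_{\Sigmait_b}$ becomes $+\mu_b - \mu_b = 0$ by construction, so $\nu + \nu'$ represents the fundamental class of the glued manifold $M' \circ_v M$. Computing its boundary the $\pm\mu_b$ contributions cancel, while the 1-collar contributions $\sigma_i\times [0,1]$ from each factor concatenate and are rescaled via the fixed diffeomorphism $[0,2]\to[0,1]$ used in Definition~\ref{defmscbordcattarget} to define vertical composition. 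The pushforward by this diffeomorphism takes the concatenation to a chain homologous to $\sigma_i\times[0,1]$ in $M' \circ_v M$, and the discrepancy between the two is the boundary of a chain of the form $\sigma_i\times(\text{2-chain in }[0,1])$ with image in $T$ factoring through $\xi_i$, hence paired trivially with $(\psi'\cup\psi)^*\theta$ by the equivalence relation. The conclusion is that $\nu + \nu'$ is a valid witness for the map $T_\theta(M'\circ_v M,\psi'\cup\psi)_{\sigma_1,\sigma_0}([\mu_a])$, producing the value $\langle (\psi'\cup\psi)^*\theta,\nu+\nu'\rangle[\mu_c]$.

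Finally, multiplicativity of the pairing of $\U(1)$-valued cochains with integral chains, together with $(\psi'\cup\psi)^*\theta|_M = \psi^*\theta$ and $(\psi'\cup\psi)^*\theta|_{M'} = \psi'^*\theta$, gives
\begin{align}
\langle (\psi'\cup\psi)^*\theta,\nu+\nu'\rangle = \langle\psi^*\theta,\nu\rangle\cdot\langle\psi'^*\theta,\nu'\rangle,
\end{align}
which matches the scalar from the composite and proves strict equality. The main obstacle I anticipate is the careful bookkeeping of the 1-collars: the rescaling from $[0,2]$ to $[0,1]$ means the naive sum of boundary contributions does not literally equal the required $\sigma_i\times[0,1]$, and one must verify that the resulting correction term lies in the kernel of the pairing with $\psi^*\theta$ so that the identification in $\Sigmait_c^{\varphi_c}(\sigma_1,\sigma_0)$ is preserved.
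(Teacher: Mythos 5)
Your proposal is correct and follows essentially the same route as the paper: reduce to the component maps $T_\theta(M)_{\sigma_1,\sigma_0}$ on generators, apply the Gluing Lemma (Lemma~\ref{lemmagluing2man}) to recognize $\nu+\nu'$ as a fundamental cycle of $M'\circ M$ with the boundary contributions along $\Sigmait_b$ cancelling, and conclude from the multiplicativity $\langle \psi^*\theta,\nu\rangle\cdot\langle{\psi'}^*\theta,\nu'\rangle=\langle(\psi'\cup\psi)^*\theta,\nu+\nu'\rangle$. Your additional bookkeeping for the rescaled 1-collars is a careful elaboration of a point the paper's two-line proof silently suppresses, not a different argument.
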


\begin{proof}
	Given two 2-morphisms $(M,\psi) :  (\Sigmait_a,\varphi_a) \Longrightarrow (\Sigmait_b,\varphi_b)$ and $(M',\psi') \colon (\Sigmait_b,\varphi_b) \Longrightarrow (\Sigmait_c,\varphi_c)$ between 1-morphisms $(S_0,\xi_0) \to (S_1,\xi_1)$ it suffices to show that for fundamental cycles $\sigma_0$ and $\sigma_1$ of $S_0$ and $S_1$, respectively, the composition of linear maps
\begin{align}
	\Sigmait^{\varphi_a}_a (\sigma_1,\sigma_0) \xrightarrow{T_\theta (M)_{\sigma_1,\sigma_0}} \Sigmait^{\varphi_b}_b (\sigma_1,\sigma_0)   \xrightarrow{T_\theta (M')_{\sigma_1,\sigma_0}} \Sigmait^{\varphi_c}_c (\sigma_1,\sigma_0) \end{align} as defined in \eqref{eqnmapbetweencoendsinduce} is equal to
		\begin{align}
		\Sigmait^{\varphi_a}_a (\sigma_1,\sigma_0) \xrightarrow{T_\theta (M'\circ M)_{\sigma_1,\sigma_0}}  \Sigmait^{\varphi_c}_c (\sigma_1,\sigma_0) .
		\end{align} 
	Picking fundamental cycles $\nu$ and $\nu'$ for $M$ and $M'$ as in Lemma~\ref{lemmagluing2man} this follows from \begin{align}
	\langle \psi^* \theta ,\nu\rangle \cdot \langle {\psi'}^* \theta,\nu'\rangle  =\langle   (\psi'\cup \psi)^* \theta,\nu+\nu' \rangle,
	\end{align} where $\psi' \cup \psi : M'\circ M \to T$ is the map obtained by gluing $\psi$ and $\psi'$.
\end{proof}

\begin{lemma}\label{lemmahomotopyinvariance}
	 $T_\theta$ is defined such that it fulfills the axiom of homotopy invariance from Definition~\ref{defhqftext}. 
\end{lemma}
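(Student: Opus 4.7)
Unwinding \eqref{eqnmapbetweencoendsinduce}, both $T_\theta(M,\psi)$ and $T_\theta(M,\psi')$ act on any generator $\mu_a \in \Fund_{\sigma_0}^{\sigma_1}(\Sigmait_a)$ by scalar multiplication of the same class $[\mu_b]$, with scalars $\langle \psi^*\theta,\nu\rangle$ and $\langle (\psi')^*\theta,\nu\rangle$ respectively; here $\nu$ is any fundamental cycle of $M$ satisfying \eqref{EQ: Condition Fundamental cycle on manifolds with corners}, $\mu_b$ is the induced fundamental cycle on $\Sigmait_b$, and neither depends on the map to $T$. The lemma therefore reduces to the scalar identity
\[ \langle \psi^*\theta,\nu\rangle \;=\; \langle (\psi')^*\theta,\nu\rangle \qquad \text{in } \U(1). \]

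The plan is to deduce this via the classical prism construction. Fix a homotopy $H\colon M\times[0,1] \to T$ between $\psi$ and $\psi'$ which is stationary in the $[0,1]$-direction on $\partial M$, and let $p\colon C_*(M) \to C_{*+1}(M\times[0,1])$ denote the singular prism operator, $\partial p + p\partial = i_{1,*} - i_{0,*}$. Applying $H_*$ and pairing with $\theta$, the identity $\delta\theta = 0$ yields
\[ \langle (\psi')^*\theta - \psi^*\theta,\, \nu\rangle \;=\; \langle \theta,\, H_* p(\partial\nu)\rangle, \]
so it suffices to show that the right-hand side vanishes.

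By \eqref{EQ: Condition Fundamental cycle on manifolds with corners} the chain $\partial\nu$ lies in $\partial M$, and since $H$ is constant in the cylinder direction there, $H_* p(\partial\nu)$ is an alternating sum of degenerate singular $n$-simplices of the form $\psi\circ\tau\circ s_i$, where $\tau$ runs over the $(n-1)$-simplices of $\partial\nu$ and $s_i\colon \Delta^n \to \Delta^{n-1}$ is the $i$-th degeneracy. I would then replace $\theta$ by a normalized cocycle within its cohomology class: modifying $\theta$ by $\delta\beta$ shifts $\langle\psi^*\theta,\nu\rangle$ by $\langle\beta,\psi_*\partial\nu\rangle$ and $\langle(\psi')^*\theta,\nu\rangle$ by $\langle\beta,\psi'_*\partial\nu\rangle$, and these agree because $\partial\nu\subset\partial M$ and $\psi|_{\partial M}=\psi'|_{\partial M}$. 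Normalized cochains vanish on degenerate simplices, so the right-hand side is $0$ and the result follows.

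The main obstacle is this concluding normalization step: unnormalized singular cochains $\theta$ need not vanish on the degenerate chain $H_* p(\partial\nu)$, so the argument genuinely exploits both the cocycle condition (for the prism identity) and the freedom to replace $\theta$ by a normalized representative---a freedom that is available precisely because the homotopy is taken relative to $\partial M$.
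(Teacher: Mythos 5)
Your proposal is correct, and its first two thirds coincide with the paper's argument: reduce to the scalar identity $\langle \psi^*\theta,\nu\rangle=\langle (\psi')^*\theta,\nu\rangle$, then use the prism/cross-product chain homotopy together with $\delta\theta=0$ to rewrite the discrepancy as $\langle\theta, H_*p(\partial\nu)\rangle$ with $\partial\nu$ supported in $\partial M$. The two proofs part ways only at the final vanishing step. The paper uses the stationarity of the homotopy to push the chain forward along $p_{\partial M}\colon \partial M\times[0,1]\to\partial M$, observes that the result is an $n$-cycle in the $(n-1)$-dimensional manifold $\partial M$ (the $[0,1]$-boundary contributions cancel under the projection), hence a boundary for dimensional reasons, on which the cocycle evaluates trivially. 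You instead use stationarity to identify the simplices of $H_*p(\partial\nu)$ as degenerate ($\psi\circ\tau\circ s_i$), pass to a normalized representative of $\theta$, and carefully check that this replacement shifts $\langle\psi^*\theta,\nu\rangle$ and $\langle(\psi')^*\theta,\nu\rangle$ by the same amount because $\psi|_{\partial M}=\psi'|_{\partial M}$ --- which is exactly the point that makes the normalization legitimate here. Both closings are sound; the paper's needs only $H_n(\partial M)=0$ and works verbatim for the fixed cocycle $\theta$, while yours imports the normalization theorem for singular cochains but has the advantage of making the degeneracy of the offending chain explicit (a device the paper itself deploys later, in the proof of Theorem~\ref{Theorem: Transgression}). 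Your invariance check under $\theta\mapsto\theta-\delta\beta$ is the one step that could easily have been glossed over, and you handled it correctly.
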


\begin{proof}
Consider 2-morphisms $(M,\psi), (M,\psi') \colon  (\Sigmait_a,\varphi_a) \Longrightarrow (\Sigmait_b,\varphi_b)$ between 1-morphisms $(S_0,\xi_0) \to (S_1,\xi_1)$ with $\psi \stackrel{h}{\simeq} \psi'$ relative $\partial M$. Let $\sigma_0$ and $\sigma_1$ be fundamental cycles of $S_0$ and $S_1$, respectively. Now for $\mu_a \in \Fund_{\sigma_0}^{\sigma_1} (\Sigma_a)$ and $\mu_b \in\Fund_{\sigma_0}^{\sigma_1} (\Sigma_b)$ we find a fundamental cycle $\nu$ of $M$ adapted to $\mu_a$ and $\mu_b$ as in equation \eqref{EQ: Condition Fundamental cycle on manifolds with corners}. By definition of $T_\theta$ on 2-morphisms it suffices to show

\begin{align}
\langle \psi^*\theta,\nu\rangle = \langle {\psi'}^*\theta,\nu\rangle . 
\end{align} 
Indeed, if we see $h$ as a map defined on $M\times [0,1]$, we find a chain homotopy between the chain maps $\psi_*$ and $\psi'_*$ given by

\begin{align} 
{\psi_*}_{p}- {\psi'_*}_{p} = \partial H_p + H_{p-1} \partial \quad \text{for all $p\in\mathbb{Z}$}, 
\end{align}
where $H_p := {h_*}_{p+1} D_p$ and 

\begin{align} 
D_p : S_p(M) \to S_{p+1}(M\times [0,1]), \quad c \mapsto c\times [0,1] 
\end{align}
 is defined using the cross-product on singular chains, see \cite[IV.16]{bredon}. Hence,

\begin{align}
\langle \psi^*\theta,\nu\rangle - \langle {\psi'}^*\theta,\nu\rangle = \langle  \theta, H_{n-1} \partial \nu  \rangle = \langle h^* \theta, \partial\nu \times [0,1] \rangle .\label{eqnhtpinvvanish}
\end{align}
The homotopy $h$ being stationary on the boundary entails
\begin{align} 
h|_{\partial M \times [0,1]}= \psi \circ p_{\partial M}
\end{align} 
with the projection $p_{\partial M} \colon \partial M \times [0,1] \to \partial M$. This yields
	\begin{align}
\langle h^* \theta, \partial\nu \times [0,1] \rangle = \langle \psi^* \theta, {p_{\partial M}}_* (\partial\nu \times [0,1]) \rangle . 	\end{align} 
We have 

\begin{align}
 \partial \left( {p_{\partial M}}_* (\partial\nu \times [0,1])\right) = 0,
\end{align}
where we use that the boundaries corresponding to the $[0,1]$ part cancel under the projection, i.e. 
\begin{align}
{p_X}_* ( X \times \partial [0,1])= 0,
\end{align} 	
for any space $X$ with projection $p_X : X \times [0,1] \to X$.
This shows that ${p_{\partial M}}_* (\partial\nu \times [0,1])$ is a cycle.
For dimensional reasons it must be a boundary as well. This shows that \eqref{eqnhtpinvvanish} vanishes and finishes the proof.
\end{proof}

\subsection{The Main Theorem}
We can now state our main result:

\begin{theorem}\label{mainthm}
	For any topological space $T$ and any cocycle $\theta \in Z^n(T;\U(1))$,
	\begin{align} T_\theta : \TCob \to \TwoVect \end{align} is an invertible homotopy quantum field theory with target $T$.
	\end{theorem}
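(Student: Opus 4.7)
The plan is to assemble the constructions of Section~\ref{secconstruchqft} into a complete verification that $T_\theta$ is a symmetric monoidal 2-functor satisfying homotopy invariance and taking values in the Picard sub-2-groupoid of $\Tvs$. The data of $T_\theta$ on objects, 1-morphisms and 2-morphisms is already in place, and three major steps are in hand: Lemma~\ref{lemmacompof1mor} supplies the coherent natural isomorphisms implementing preservation of composition of 1-morphisms, Lemma~\ref{lemmaverticalcomp} gives strict preservation of vertical composition of 2-morphisms, and Lemma~\ref{lemmahomotopyinvariance} establishes the homotopy invariance axiom.

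What is left is well-definedness of the assignment on 2-morphisms, horizontal composition, the symmetric monoidal upgrade, and invertibility. For well-definedness I would first check that the linear map \eqref{eqnmapbetweencoendsinduce} is independent of the auxiliary fundamental cycle $\nu$: any two cycles $\nu,\widetilde\nu$ adapted to the same pair $(\mu_a,\mu_b)$ as in \eqref{EQ: Condition Fundamental cycle on manifolds with corners} have difference a relative cycle in $C_n(M)$ whose $\theta$-pairing is trivialised precisely by the equivalence relation defining $\Sigma_b^{\varphi_b}(\sigma_1,\sigma_0)$. Invariance under the equivalence $(M,\psi) \sim (\widetilde M,\widetilde \psi)$ of pairs follows by pushing $\nu$ forward along the mediating diffeomorphism $\Phi$ and using $\psi = \widetilde\psi \circ \Phi$.

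Horizontal composition of 2-morphisms is handled by gluing along a 1-boundary: Lemma~\ref{lemmagluing2man} produces a fundamental cycle of the horizontal composite from fundamental cycles of the two pieces, the holonomy of $\theta$ is multiplicative on this sum, and combining this with the associators \eqref{eqnnatisocohasso} yields the interchange law. The symmetric monoidal structure is dictated by disjoint union on the source and the Deligne product on the target; the comparison equivalence $T_\theta(S_0 \sqcup S_1,\xi_0 \sqcup \xi_1) \simeq T_\theta(S_0,\xi_0) \boxtimes T_\theta(S_1,\xi_1)$ is immediate from the decomposition $\Fund(S_0 \sqcup S_1) \cong \Fund(S_0) \times \Fund(S_1)$ and the additivity of the cochain pairing on chains supported on disjoint components. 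The coherence axioms (associators, unitors, pentagon, hexagon, symmetry) reduce to coend and Gluing Lemma manipulations of the type carried out in Lemma~\ref{lemmacompof1mor}.

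Invertibility is the cleanest step and should be carried out last. Each $T_\theta(S,\xi)$ is a 2-line by construction. For any 1-morphism, $\Sigma^\varphi(\sigma_1,\sigma_0)$ is a one-dimensional complex vector space, because $\Fund_{\sigma_0}^{\sigma_1}(\Sigma)$ is non-empty and connected and the relation $\sim$ identifies any two generators up to a scalar in $\U(1)$; hence $T_\theta(\Sigma,\varphi)$ is an equivalence between 2-lines and therefore $\boxtimes$-invertible. Every assigned 2-morphism is multiplication by $\langle\psi^*\theta,\nu\rangle \in \U(1) \subset \C^\times$, hence invertible. The main obstacle in the above programme is the notational bookkeeping for the coherence of the symmetric monoidal structure; the underlying geometric content is already captured by Lemmas~\ref{lemmamscgluingcyclescob} and \ref{lemmagluing2man} together with the co-Yoneda and Fubini arguments used in Lemma~\ref{lemmacompof1mor}, and no new geometric input is required.
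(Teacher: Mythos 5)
Your proposal is correct and follows essentially the same route as the paper: it reduces the theorem to Lemmas~\ref{lemmacompof1mor}, \ref{lemmaverticalcomp} and \ref{lemmahomotopyinvariance}, handles horizontal composition by choosing fundamental cycles as in Lemma~\ref{lemmagluing2man}, builds the symmetric monoidal structure from the decomposition of fundamental cycles over disjoint unions, and observes invertibility from the fact that all assigned data are 2-lines, one-dimensional vector spaces, and $\U(1)$-scalars. The only differences are organizational (you fold the well-definedness of the assignment on 2-morphisms into the theorem's proof, whereas the paper asserts it during the construction, and you spell out the invertibility the paper calls obvious), which does not change the argument.
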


\begin{proof}
	Thanks to the Lemmata~\ref{lemmacompof1mor}, \ref{lemmaverticalcomp} and \ref{lemmahomotopyinvariance} it remains to prove the following: 
	\begin{itemize}

		\item 
		Horizontal composition:
		Given 1-morphisms
		\begin{align}
		(\Sigmait_a,\varphi_a) : (S_0,\xi_0) &\to (S_1,\xi_1),\\
		(\Sigmait_b,\varphi_b) : (S_1,\xi_1) &\to (S_2,\xi_2)
		\end{align} and 2-morphisms 
		\begin{align}
		(M,\psi) : 	(\Sigmait_a,\varphi_a) &\Longrightarrow 	(\Sigmait_b,\varphi_b),\\
				(M',\psi') : 	(\Sigmait_b,\varphi_c) &\Longrightarrow 	(\Sigmait_c,\varphi_c)
		\end{align} we have to show that for fundamental cycles $\sigma_0$, $\sigma_1$ and $\sigma_2$ of $S_0$, $S_1$ and $S_2$, respectively, the square 
		\begin{equation}
		\begin{tikzcd}
		\Sigmait_a^{\varphi_a} (\sigma_1,\sigma_0)\otimes \Sigmait_b^{\varphi_b} (\sigma_2,\sigma_1) \ar{rr}{\Phi} \ar[swap]{dd}{T_\theta (M)_{\sigma_1,\sigma_0}\otimes T_\theta (M')_{\sigma_2,\sigma_1}} & & (\Sigmait_b \circ \Sigmait_a)^{\varphi_b\cup \varphi_a} (\sigma_2,\sigma_1) \ar{dd}{T_\theta (M' \circ M)_{\sigma_2,\sigma_0}} \\
		& & \\
		\Sigmait_b^{\varphi_b} (\sigma_1,\sigma_0)\otimes \Sigmait_c^{\varphi_c} (\sigma_2,\sigma_1) \ar{rr}{\Phi} & & (\Sigmait_c \circ \Sigmait_b)^{\varphi_c\cup \varphi_b} (\sigma_2,\sigma_1)
		\end{tikzcd}
		\end{equation} featuring as the horizontal arrows the isomorphisms from the proof of Lemma~\ref{lemmacompof1mor} commutes.
		By picking representatives for the fundamental classes as in Lemma~\ref{lemmagluing2man} this can be verified directly.
		
		\item Symmetric monoidal structure:
		There are natural equivalences of categories 
		\begin{align}
		\iota_\theta : T_\theta (\emptyset) &\longrightarrow \fvs \\
		\sigma_\emptyset &\mapsto \C
		\end{align}	
		and 	  
		\begin{align}
		\chi_\theta((S_0, \xi_0),(S_1,\xi_1))\colon T_\theta (S_0,\xi_0)\boxtimes  T_\theta (S_1,\xi_1)  &\longrightarrow T_\theta(S_0\sqcup S_1, \xi_0 \sqcup \xi_1) \\
		\sigma_{S_0}\boxtimes \sigma_{S_1} & \mapsto \sigma_{S_0} + \sigma_{S_1} ,
		\end{align}
		where we suppress the inclusion into the disjoint union and denote by $\sigma_\emptyset$ the unique fundamental cycle of the empty set (that it has by convention).
		The modifications which are part of the structure of a symmetric monoidal 2-functor (Definition B.12 \cite{MS}) are trivial, since the corresponding diagrams commute on generators.
		The simple form of the coherence isomorphism makes it straightforward to check that the corresponding diagrams commute.
	\end{itemize}
The field theory $T_\theta$ is obviously invertible.
\end{proof}

\begin{remark}\label{Rem: Non extended theory}
Restricting $T_\theta $ to the endomorphisms of the empty set induces a non-extended homotopy quantum field theory 
\begin{align}
T_\theta :  T\text{-}\mathbf{Cob}(n,n-1) \to \fvs \ ,
\end{align}
which admits the following concrete description
\begin{itemize}
\item 
To a closed $n-1$ dimensional manifold $\Sigma$ equipped with a map $\varphi : \Sigma \to T$ it assigns the vector space $T_\theta(\Sigma, \varphi)=\Sigma^\varphi(\emptyset,\emptyset)= \C[\Fund(\Sigma)]/{\sim}$.

\item 
To a morphism $(M,\psi): (\Sigma_a,\varphi_a) \to (\Sigma_b,\varphi_b)$ it assigns the linear map 
\begin{align}
T_\theta(M,\psi): T_\theta(\Sigma_a,\varphi_a) & \to T_\theta (\Sigma_b,\varphi_b) \\
[\sigma_{\Sigma_a}]& \longmapsto \langle \psi^*\theta, \sigma_M \rangle [\sigma_{\Sigma_b}] \ ,
\end{align}
with $\partial \sigma_M= \sigma_{\Sigma_b}- \sigma_{\Sigma_a}$.
\end{itemize}
This is the primitive homotopy quantum field theory constructed in \cite[I.2.1]{turaevhqft}.
\end{remark}

The following assertion shows that up to natural equivalence $T_\theta$ only depends on the cohomo\-logy
class of $\theta$. 
\begin{proposition}
	Let $\theta$ and $\theta'$ be $n$-cocycles on a topological space $T$ with values in $\U(1)$ and 
	$\Lambda$ an $n-1$-chain on $T$ satisfying $\operatorname{d}\Lambda= \theta'-\theta$. Then $\Lambda$ induces a 
	symmetric monoidal natural equivalence 
	\begin{align}
	T_\Lambda : T_\theta \longrightarrow T_{\theta'} \ \ .
	\end{align} 
\end{proposition}

\begin{proof}
	
	For all $(S,\xi)\in \TCob$ we define linear functors 
	\begin{align}
	T_\Lambda(S,\xi) : T_\theta(S,\xi) & \longrightarrow T_{\theta'}(S,\xi) \\
	\sigma & \longmapsto \sigma \\
	[\lambda] &\mapsto [\langle \xi^*\Lambda, \lambda \rangle \cdot \lambda] \ \ . 
	\end{align}
	For a 1-morphism $(\Sigma,\varphi): (S_0,\xi_0)\longrightarrow (S_1,\xi_1)$ we get natural 
	linear maps between the vector spaces 
	\begin{align}
	\Sigma^\varphi_\theta(\sigma_1,\sigma_0)&\longrightarrow \Sigma^\varphi_{\theta'}(\sigma_1,\sigma_0) \\
	[\mu] &\longmapsto [\langle \varphi^*\Lambda, \mu \rangle \cdot \mu] \ \ , 
	\end{align}
	where we added the subscripts $\theta$ and $\theta'$ to indicate the respective cocycles that enter the definition of the vector spaces $\Sigma^\varphi(?,?)$. These maps 
	induce maps between the corresponding coends and combine into a natural transformations 
	\begin{align}
	T_\Lambda(\Sigma,\varphi): T_{\theta'}(\Sigma,\varphi)\circ T_\Lambda(S_0,\xi_0)\Longrightarrow  T_\Lambda(S_1,\xi_1)\circ T_{\theta}(\Sigma,\varphi)\ \ . 
	\end{align}
	A straightforward computation shows that this defines a natural transformation of 2-functors. 
	Furthermore, it is clear how to equip $T_\Lambda$ with the structure of a symmetric monoidal transformation.
	Finally, we observe that $T_\Lambda$ is even a symmetric monoidal equivalence because $T_{-\Lambda}$ provides a weak inverse.       
\end{proof}
\begin{remark}
	In the same way an $n-2$-chain $\Omega$ satisfying $\operatorname{d}\Omega=\Lambda'-\Lambda$ induces symmetric monoidal 
	modifications between the natural transformations $T_\Lambda$ and $T_{\Lambda'}$. We do not spell 
	out the details here. 
\end{remark}

\section{Aspherical targets and transgression\label{sectransgression}}
We arrive at an interesting special case of Definition~\ref{defhqftext} if we choose as the target space of our sigma model the classifying space of a finite group.

\begin{definition}
	For a  finite group $G$ we define by $G\text{-}\Cob(n,n-1,n-2) := BG\text{-}\Cob(n,n-1,n-2)$ the \emph{symmetric monoidal category of $G$-bordisms} (there is a slight abuse of notation because $G\text{-}\Cob(n,n-1,n-2)$ could also describe bordisms with maps to the discrete space $G$; however that would not be interesting from the point of view of homotopy theory). We call an extended homotopy quantum field theory
	\begin{align}
	Z: G\text{-}\Cob(n,n-1,n-2) \to \Tvs
	\end{align} an \emph{extended $G$-equivariant topological quantum field theory}. 
	\end{definition}

\noindent In the non-extended case these appear as homotopy quantum field theories with aspherical targets in \cite{turaevhqft}. Three-dimensional extended $G$-equivariant topological field theories are discussed in \cite{maiernikolausschweigerteq} using the language of principal fiber bundles and with an emphasis on theories of Dijkgraaf-Witten type. A definition of extended $G$-equivariant topological field theories of arbitrary dimension and a detailed investigation of the three-dimensional case including a relation to equivariant modular categories is given in \cite{extofk}.

The goal of this section is to describe the extended equivariant topological quantum field theories associated to a cocycle on the classifying space of a finite group by transgression of this cocycle.

\subsection{Classification of 2-line bundles}
Let us briefly recall the definition of a line bundle:
A \emph{vector bundle over a groupoid $\Gamma$} is a functor $\rho : \Gamma \to \fvs$. Although in algebraic terms this is just a representation of $\Gamma$, the geometric viewpoint has proven to be profitable, see e.g. \cite{willterongerbesgrpds} or \cite{schweigertwoikeofk}. 

A line bundle $L :  \Gamma \to \fvs$ is a vector bundle for which all fibers, i.e. images $\rho(x)$ for $x\in \Gamma$, are one-dimensional vector spaces. Formulated differently, a line bundle takes values in the maximal Picard subgroupoid of $\fvs$. The Picard groupoid corresponding to $\fvs$ is equivalent to the category $\C \DS \C^\times$ with one object $\C$ and $\C^\times$ as endomorphisms. Hence, we
can factor every line bundle up to a natural isomorphism as
\begin{equation}\label{EQ: Classification of line bundles}
\begin{tikzcd}
\, \Gamma  \ar{r}{L}\ar{d}{\tilde{L}} & \fvs \\
\C\DS \C^\times \ar[hookrightarrow]{ru}{} &
\end{tikzcd}.
\end{equation} 
This shows that the groupoid of line bundles over $\Gamma$ is equivalent to the groupoid $[\Gamma,\C\DS\C^\times]$ of functors $\Gamma \to \C\DS\C^\times$. Hence, line bundles over $\Gamma$ are classified by 
\begin{align}
\pi_0[\Gamma,\C\DS\C^\times] =  [|B\Gamma|,K(\C^\times,1)] = H^1(\Gamma;\C^\times) \, 
\end{align}
where 
\begin{itemize}
	\item $|B\Gamma|$ is the geometric realisation of the nerve $B\Gamma$ of $\Gamma$,
	\item  $K(\C^\times,1)$ is the aspherical Eilenberg-MacLane space with fundamental group $\C^\times$
	\item and $H^1(\Gamma;\C^\times)$ the first groupoid cohomology with coefficients in $\C^\times$.
\end{itemize}
Categorifying the definition of a vector bundle over a groupoid, we arrive at the following notion, see \cite{swpar}:

\begin{definition}
A \emph{2-vector bundle over a groupoid $\Gamma$} is a 2-functor
\begin{align*}
\rho : \Gamma \to \Tvs \ ,
\end{align*}
where we consider $\Gamma$ as a 2-category with only trivial 2-morphisms. 
A \emph{2-line bundle over $\Gamma$} is a 2-vector bundle which takes values in the full Picard sub-2-groupoid of $\Tvs$.
\end{definition}
A 2-vector space is invertible with respect to the Degline tensor product if and only if it is 1-dimensional, i.e. equivalent to the category of vector spaces. Every linear functor between two 1-dimensional 2-vector spaces can be described up to natural isomorphism by a vector space. The functor is invertible if and only if this vector space is 1-dimensional.
This shows that the Picard 2-subgroupoid of $\Tvs$ is equivalent to $\fvs\DS \id \DS \C^\times$.  
Hence, for every 2-line bundle $L: \Gamma \to \Tvs$ there is a diagram 

\begin{equation}\label{EQ: Classification of 2-line bundles}
\begin{tikzcd}
\, \Gamma \ar{r}{L}  \ar{d}{\tilde{L}} & \fvs \\
\fvs\DS \id \DS \C^\times \ar[hookrightarrow]{ru}{}  &
\end{tikzcd}
\end{equation} 
commutative up to a 2-isomorphism.

Using the higher categorical analogue of \eqref{EQ: Classification of line bundles}, we arrive at a classification of 2-line bundles in terms of
\begin{align}
\pi_0[\Gamma,\fvs\DS \id \DS \C^\times] = [|B\Gamma|, K(\C^\times,2)] = H^2(\Gamma;\C^\times)\ .
\end{align} 
For a 2-functor $\tilde{L}:  \Gamma \to \fvs\DS \id \DS \C^\times$ there is only one choice for the definition on objects, 1-morphisms and 2-morphisms. The only non-trivial information contained in such a 2-functor are the coherence isomorphisms, which amount to a complex number $\alpha(g_1,g_2) \in \C^\times$ for every pair of composable morphisms $g_1$ and $g_2$ in $\Gamma$. These numbers form a groupoid 2-cocycle, see \cite{kirrilovg04} for the case of equivariant linear categories, \cite{swpar} for general 2-vector bundles and \cite{MS} for a detailed discussion of 2-line bundles.   
Given a 2-line bundle we can construct the corresponding 2-coycle by first choosing trivializations and then calculating the number corresponding to the coherence isomorphisms.
  
\subsection{A reminder on transgression}
Let us briefly recall the concept of transgression, see e.g. \cite{willterongerbesgrpds}: 
Let $M$ be an $\ell$-dimensional closed oriented manifold with fundamental class $\sigma$. For a topological space $T$ and a class in $H^k(T;\U(1))$ with $k\ge \ell$ represented by a cocycle $\theta$ we can define the class $\tau_M \theta \in H^{k-\ell} (T^M; \U(1))$ as being represented by the cocycle given by
\begin{align}
(\tau_M \theta) (\lambda) := (\text{ev}^* \theta) (\lambda \times \sigma) 
\end{align} for any $k-\ell$-simplex $\lambda : \Delta_{k-\ell} \to T^M$, where $\text{ev} : T^M \times M \to T$ is the evaluation map. Here $T^M$ is the space of maps $M\to T$ equipped with the compact-open topology. 
This gives rise to a map 
\begin{align}
\tau_M : H^k(T;\U(1)) \to H^{k-\ell} (T^M; \U(1)),
\end{align} the so-called \emph{transgression}. If $T$ is aspherical, then $T^M$ is equivalent to the groupoid $\Pi(M,T)$ of maps from $M \to T$ with equivalence classes of homotopies as morphisms. In that case the transgression can be seen to take values in $H^{k-\ell} (\Pi(M,T); \U(1))$, i.e. in groupoid cohomology. 

\subsection{Invariants of closed oriented manifolds equipped with bundles and transgression}
By Theorem~\ref{mainthm} we get for any flat $n-1$-gerbe on the classifying space of a finite group $G$, i.e. a cocycle $\theta \in Z^n(BG;\U(1))$ an extended $G$-equivariant topological quantum field theory $Z_\theta : G\text{-}\Cob(n,n-1,n-2) \to \TwoVect$. As any extended $G$-equivariant topological quantum field theory, it gives us an invariant for closed oriented manifolds of dimension $n$, $n-1$ and $n-2$ equipped with a map from the manifold to $BG$, i.e. a $G$-bundle over this manifold. 
Here invariant means that two manifolds with principal bundle $(M,\psi: M \longrightarrow BG)$ and
$(M',\psi': M' \longrightarrow BG)$ are assigned the same object (up to equivalence) if they are 
related by a diffeomorphism $f: M\longrightarrow M'$ combined with a homotopy $h\colon f^*\psi'\longrightarrow \psi$.

For a closed oriented $n$-dimensional manifold $M$, this invariant is a complex number given by the function
\begin{align}
Z_\theta(M,?) : \Pi(M,BG) \to \C, \quad \psi \mapsto \langle \psi^*\theta,\mu_M \rangle
\end{align} on the groupoid $\Pi(M,BG)$. This function is constant on isomorphism classes, i.e. it is a 0-cocycle in the cohomology of the groupoid $\Pi(M,BG)$. This cocycle is given by the transgression of $\theta$. More precisely, $Z(M,?) \in H^0 ( \Pi(M,BG); \U(1))$ is the image of $\theta$ under the transgression map $\tau : H^n(M;\U(1)) \to H^0 ( \Pi(M,BG); \U(1))$. 

We will now show that the invariant obtained from $Z_\theta$ for manifolds of dimension $n-1$ and $n-2$ equipped with bundles can also be described by an appropriate transgression of $\theta$.

Evaluating the primitive theory $Z_\theta$ on a closed $n-1$-dimensional $\Sigmait$ manifold gives a line bundle 
$Z_\theta (\Sigmait,?) : \Pi (\Sigmait , BG) \to \fvs$. To see this, we see $Z_\theta$ as a non-extended $G$-equivariant topological quantum field theory, see Remark \ref{Rem: Non extended theory}, and apply \cite[Proposition~2.10]{schweigertwoikeofk}. 

\begin{proposition}
Let $G$ be a finite group and $\theta \in Z^n(BG;\U(1))$. Then for any $n-1$-dimensional closed oriented manifold $\Sigmait$ the class $\langle Z_\theta(\Sigmait,?)\rangle \in H^1 (\Pi(\Sigmait,BG);\U(1))$ describing the line bundle $Z_\theta (\Sigmait,?) : \Pi (\Sigmait , BG) \to \fvs$ is given by
\begin{align} 
\langle Z_\theta(\Sigmait,?)\rangle = \tau_\Sigmait \theta,
\end{align} i.e. by the transgression $\tau_\Sigma \theta$ of $\theta$ to $\Pi(\Sigmait,BG)$. 
\end{proposition}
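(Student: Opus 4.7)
The plan is to reduce to the non-extended theory via Remark~\ref{Rem: Non extended theory}, choose a canonical trivialization of the line bundle, compute the resulting cocycle, and then recognize it as the transgression.

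First, fix once and for all a fundamental cycle $\sigma \in Z_{n-1}(\Sigmait)$. For every map $\varphi: \Sigmait \to BG$, the class $[\sigma] \in Z_\theta(\Sigmait,\varphi) = \C[\Fund(\Sigmait)]/{\sim}$ is a nonzero vector and therefore provides a basis, since Theorem~\ref{mainthm} guarantees this vector space is one-dimensional. This gives a set-theoretic trivialization of the line bundle $Z_\theta(\Sigmait,?)$ on objects, so the isomorphism class of $Z_\theta(\Sigmait,?)$ is computed by reading off the scalar by which each morphism in $\Pi(\Sigmait,BG)$ acts on $[\sigma]$.

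Next, identify morphisms: a morphism $[h] : \varphi_0 \to \varphi_1$ in $\Pi(\Sigmait,BG)$ is represented by a continuous map $h : \Sigmait \times [0,1] \to BG$ with $h|_{\Sigmait\times\{0\}}=\varphi_0$ and $h|_{\Sigmait\times\{1\}}=\varphi_1$, well-defined up to homotopy relative to the boundary. By Lemma~\ref{lemmahomotopyinvariance}, evaluating $Z_\theta$ on the cylinder $\Sigmait \times [0,1]$ equipped with the map $h$ depends only on $[h]$. Using the explicit formula for the non-extended theory from Remark~\ref{Rem: Non extended theory}, and picking the canonical fundamental chain $\sigma \times [0,1] \in C_n(\Sigmait \times [0,1])$ whose boundary is $\sigma - \sigma$, we obtain
\begin{align}
Z_\theta([h])[\sigma] \;=\; \langle h^*\theta,\, \sigma \times [0,1]\rangle \,[\sigma] \ .
\end{align}
Hence the 1-cocycle representing $\langle Z_\theta(\Sigmait,?)\rangle$ sends $[h]$ to $\langle h^*\theta,\, \sigma \times [0,1]\rangle \in \U(1)$.

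Finally, unpack the transgression. For a 1-simplex $\lambda : \Delta_1 \to (BG)^\Sigmait$, by definition $(\tau_\Sigmait\theta)(\lambda) = (\mathrm{ev}^*\theta)(\lambda \times \sigma)$, where $\mathrm{ev}: (BG)^\Sigmait \times \Sigmait \to BG$. A 1-simplex in $\Pi(\Sigmait,BG)$ corresponding to $[h]$ is, under the exponential law, precisely the adjoint $\widehat{h} : [0,1] \to (BG)^\Sigmait$ of $h$, and $\mathrm{ev}\circ(\widehat{h}\times\id_\Sigmait) = h$. Comparing the two cross products $\widehat{h}\times \sigma$ and $\sigma \times [0,1]$ under this adjunction shows
\begin{align}
(\tau_\Sigmait\theta)([h]) \;=\; \langle h^*\theta,\, \sigma\times[0,1]\rangle ,
\end{align}
which matches the cocycle computed above.

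The main obstacle I anticipate is purely bookkeeping: verifying that the exponential-law identification between $\sigma\times[0,1]$ and $\widehat h \times \sigma$, together with the chosen orientations and the sign convention $(-1)^{n-2}$ appearing in \eqref{EQ: Condition Fundamental cycle on manifolds with corners}, really produces the transgression cocycle on the nose rather than up to a global sign or coboundary. One must also check that the scalar $\langle h^*\theta,\sigma\times[0,1]\rangle$ is independent of the chosen representative $h$ of $[h]$ and of the chosen fundamental cycle $\sigma$, but this is exactly the content of Lemma~\ref{lemmahomotopyinvariance} together with the equivalence relation defining $Z_\theta(\Sigmait,\varphi)$.
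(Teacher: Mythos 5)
Your proposal is essentially the paper's own proof: trivialize the line bundle by a fixed fundamental cycle $\sigma_\Sigmait$, evaluate the (non-extended) theory on the cylinder $\Sigmait\times[0,1]$ carrying a representative $h$ of the homotopy class, and identify the resulting scalar with $\tau_\Sigmait\theta$ via the exponential law and the evaluation map. The sign bookkeeping you flag is the only point the paper makes explicit and you leave open: the fundamental chain of the cylinder with boundary $\sigma_\Sigmait-\sigma_\Sigmait$ is $(-1)^{\dim\Sigmait}\,\sigma_\Sigmait\times[0,1]$ (not $\sigma_\Sigmait\times[0,1]$ as written in your display), and this sign is cancelled precisely by the Koszul sign $T_*(a\times b)=(-1)^{\deg a\cdot\deg b}\,b\times a$ when flipping $\sigma_\Sigmait\times \widehat h$ into $\widehat h\times\sigma_\Sigmait$ to match the transgression formula.
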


\begin{proof}
By abuse of notation we will denote the non-extended theory that $Z_\theta$ gives rise to in the sense of Remark~\ref{Rem: Non extended theory} also by $Z_\theta$. 
We fix a fundamental cycle $\sigma_\Sigmait$ of $\Sigmait$. This induces a linear isomorphism 
\begin{align}
Z_\theta(\Sigmait,\varphi)\to \C \\
[\sigma_\Sigmait] \ \ \mapsto 1
\end{align}
for all $\varphi : \Sigmait \to BG$. Consider a morphism $h: \varphi_1\to \varphi_2 $ in $\Pi(\Sigmait,BG)$, i.e. a map $h\colon \Sigmait \times [0,1]\rightarrow BG$. We can factor $h$ as 
\begin{equation}
\begin{tikzcd}
\, \Sigmait \times [0,1] \ar[swap]{d}{\id\times h} \ar{r}{h} & BG \\
\Sigmait \times BG^\Sigmait \ar[swap]{ru}{\text{ev}\circ T} &
\end{tikzcd} \ ,
\end{equation} 
where we denote by $T: \Sigma \times \Sigma^{BG} \rightarrow \Sigma^{BG} \times \Sigma$ the flip map and the image of $h$ under the adjunction $\Sigmait \times ? \dashv \ ?^\Sigmait $ again by $h$.
The cocycle ${Z}(\Sigmait, ?)$ evaluated on $h$ is given by \begin{align}
{Z_\theta}(\Sigmait, ?)(h)&= \langle h^* \theta , (-1)^{\text{dim} (\Sigmait)}\sigma_\Sigmait \times [0,1]  \rangle \\
 &= \langle \text{ev}^* \theta ,(-1)^{\text{dim}(\Sigmait) }T_*\sigma_\Sigmait \times h \rangle \\
 &= \langle \text{ev}^* \theta ,h \times \sigma_\Sigmait \rangle \\
 &= \tau_\Sigmait \theta \ ,  
\end{align}
where we used $T_*(a\times b)= (-1)^{\text{deg}(a)\cdot \text{deg}(b)}b\times a$, see e.g. \cite[Section~3.B]{AT}. 
\end{proof}

\noindent By \cite[Proposition~2.5]{extofk} we obtain by evaluation of $Z_\theta$ on an $n-2$-dimensional closed oriented manifold $S$ a representation $Z_\theta(S,?) : \Pi_2(S,BG) \to \TwoVect$ of the second fundamental groupoid of the mapping space $BG^S$ of maps from $S$ to $BG$. Since $BG$ is aspherical, this reduces to a 2-line bundle \begin{align} Z_\theta(S,?) : \Pi(S,BG)=\Pi_1(S,BG) \to \TwoVect\end{align} over the groupoid of maps $S \to BG$ with equivalence classes of homotopies as morphisms, i.e. the groupoid of $G$-bundles over $S$. This is accomplished by pulling back along a fixed equivalence
\begin{align}
\widehat{?} : \Pi_1(S,BG) \to \Pi_2(S,BG) \end{align} being the identity on objects and sending a class $h$ of homotopies to an arbitrary, but fixed representative $\widehat{h}$. The coherence isomorphisms for $\widehat{?}$ are unique.

\begin{theorem}\label{Theorem: Transgression}
	Let $G$ be a finite group and $\theta \in Z^n(BG;\U(1))$. Then for any $n-2$-dimensional closed oriented manifold $S$ the class $\langle Z_\theta(S,?)\rangle \in H^2 (\Pi(S,BG);\U(1))$ describing the 2-line bundle $Z_\theta (S,?) : \Pi (S , BG) \to \Tvs$ is given by
	\begin{align} 
	\langle Z_\theta(S,?)\rangle = \tau_S \theta, 
	\end{align} i.e. by the transgression $\tau_S \theta$ of $\theta$ to $\Pi(S,BG)$. 
	\end{theorem}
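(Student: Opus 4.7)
The plan is to trivialize the 2-line bundle $Z_\theta(S,?)$ by choosing a fundamental cycle $\sigma_S \in Z_{n-2}(S)$: for every $\xi : S \to BG$ the object $\C * \sigma_S$ is a simple generator of $Z_\theta(S,\xi)$, so the assignment $\C * \sigma_S \mapsto \C$ produces an equivalence $Z_\theta(S,\xi) \simeq \fvs$. Under this trivialization the 2-line bundle is classified by a 2-cocycle $\alpha \in Z^2(\Pi(S,BG);\U(1))$ whose value on a composable pair $(h_1, h_2)$ is read off from the coherence 2-isomorphisms of $Z_\theta$ combined with those of the fixed equivalence $\widehat{?} : \Pi_1(S,BG) \to \Pi_2(S,BG)$; the goal is to check $\alpha = \tau_S\theta$ directly on cocycles.

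First I would analyze $Z_\theta(\widehat h)$ for a 1-morphism $h : \xi_0 \to \xi_1$ with its chosen lift $\widehat h : S \times [0,1] \to BG$. As in the preceding proposition, under the trivialization $Z_\theta(\widehat h)$ is tensor product with the one-dimensional line $(S\times[0,1])^{\widehat h}(\sigma_S, \sigma_S)$, spanned by the class of $(-1)^{n-2}\sigma_S \times [0,1]$; connectivity of $\Fund_{\sigma_S}^{\sigma_S}(S\times [0,1])$ guarantees that this space is one-dimensional.

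For composable $h_1, h_2$ the coherence of $\widehat{?}$ supplies a 2-morphism in $\Pi_2(S,BG)$ from $\widehat{h_2} \circ \widehat{h_1}$ to $\widehat{h_2 h_1}$, represented by a homotopy of homotopies $H : S \times [0,1]^2 \to BG$ stationary on the relevant boundary faces. Interpreted as a 2-morphism in $\TCob$ on the $\langle 2 \rangle$-manifold $S \times [0,1]^2$ with its canonical collars, the formula defining $T_\theta$ on 2-morphisms yields the scalar $\langle H^*\theta, \nu\rangle$ for a fundamental cycle $\nu$ of $S \times [0,1]^2$ satisfying the boundary relation \eqref{EQ: Condition Fundamental cycle on manifolds with corners}; a natural choice is $\nu = \pm\, \sigma_S \times \nu_{[0,1]^2}$ with $\nu_{[0,1]^2}$ the standard fundamental 2-chain of the square. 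The coherence 2-isomorphism for composition of 1-morphisms in $Z_\theta$ (identity on the generators $\sigma_S \times [0,1]$ by the explicit form in the proof of Lemma~\ref{lemmacompof1mor}) contributes no further factor, so $\alpha(h_1, h_2) = \pm\, \langle H^*\theta, \sigma_S \times \nu_{[0,1]^2}\rangle$.

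Finally, the exponential adjunction identifies $H$ with a map $[0,1]^2 \to BG^S$; precomposition with a standard simplicial subdivision $\Delta_2 \to [0,1]^2$ produces the 2-simplex $\lambda : \Delta_2 \to BG^S \simeq \Pi(S,BG)$ representing the composable pair $(h_1,h_2)$ in the nerve of $\Pi(S,BG)$. Naturality of the cross product under the evaluation map $\text{ev} : BG^S \times S \to BG$ then yields $\langle H^* \theta, \sigma_S \times \nu_{[0,1]^2}\rangle = \langle \text{ev}^* \theta, \lambda \times \sigma_S\rangle = (\tau_S \theta)(\lambda)$ by definition of transgression. The main obstacle is orientation bookkeeping: tracking the sign $(-1)^{n-2}$ in \eqref{EQ: Condition Fundamental cycle on manifolds with corners}, the graded commutativity of the cross product, and the orientation of the subdivision $\Delta_2 \to [0,1]^2$; once these are aligned, the matching $\alpha = \tau_S\theta$ is the direct 2-categorical upgrade of the line-bundle computation in the preceding proposition.
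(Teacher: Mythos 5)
Your proposal follows essentially the same route as the paper: trivialize via a fixed fundamental cycle $\sigma_S$, read off the classifying $2$-cocycle as $\langle H^*\theta,\ \sigma_S\times\nu_{[0,1]^2}\rangle$ for a homotopy $H$ realizing $\widehat{h_2}\,\widehat{h_1}\simeq\widehat{h_2 h_1}$, and convert this to the transgression formula via the exponential adjunction and naturality of the cross product. The only point you leave implicit is the passage from the square to a single $2$-simplex: the paper triangulates $[0,1]^2$ into three pieces, arranges the map to $BG^S$ to be constant along vertical lines so that the two outer pieces become degenerate simplices, and takes a normalized representative of $\theta$ so that only the middle triangle contributes.
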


\begin{proof}
	We need to compute $\widetilde{Z_\theta(S,?)} : \Pi (S,BG) \to \FinVect // \id // \mathbb{C}^\times$. To this end, we define for each $\xi \in \Pi(S,BG)$ the equivalence
\begin{align} 
\chi_ \xi : Z_\theta(S,\xi) \to \FinVect, \quad V * \sigma \mapsto V 
\end{align} 
and fix the choice of a fundamental cycle $\sigma_S$ of $S$ to obtain a weak inverse
\begin{align} 
\chi_\xi^{-1} : \FinVect \to Z_\theta(S,\xi), \quad V \mapsto V* \sigma_S.
\end{align}
Next for any morphism $h: \xi_0 \to \xi_1$ in $\Pi(S,BG)$ we define the vector space $V_{\widehat{h}}$ by the (weak) commutativity of the square 
		\begin{equation}
	\begin{tikzcd}
Z_\theta(S,\xi_0) \ar{rr}{Z(S\times [0,1],\hat{h})} \ar[swap]{dd}{\chi_{\xi_0}} & & Z_\theta(S,\xi_1)  \\
	& & \\
	\FinVect \ar{rr}{?\otimes V_{\widehat{h}}} & & \FinVect \ar[swap]{uu}{\chi_{\xi_1}^{-1}}
	\end{tikzcd}, 
	\end{equation}
	i.e.
	\begin{align}
	V_{\widehat{h}} = \chi_{\xi_1} \int^{\sigma \in \Fund(S)} (S\times [0,1])^{\widehat h} (\sigma,\sigma_S) * \sigma \cong (S\times [0,1])^{\widehat h} (\sigma_S,\sigma_S).\label{eqnvsvh}
	\end{align} Note that we have a canonical isomorphism
	\begin{align}
	V_{\widehat h} \to \mathbb{C}, \quad (-1)^{\dim S} \sigma_S \times [0,1] \mapsto 1.\label{eqnisovhC}
	\end{align} For two composable morphisms $h$ and $h'$ in $\Pi(S,BG)$ we denote the composition by $h'h$ and obtain the 2-isomorphism
	\begin{align} Z_\theta(S\times [0,1],\widehat{h}) Z_\theta(S \times [0,1],\widehat{h'})  \xrightarrow{\substack{\text{coherence} \\  \text{of $Z_\theta$}}}  Z_\theta(S\times [0,1],\widehat{h'} \widehat{h}) 
	\xrightarrow{\substack{\text{evaluation of $Z_\theta$ on} \\  \widehat{h'} \widehat{h} \simeq \widehat{h'h}}} 
	Z_\theta(S\times [0,1],\widehat{h'h}).
	\end{align} By \eqref{eqnvsvh} this amounts to a map
	\begin{align}
	V_{\widehat h} \otimes V_{\widehat{h'}} \to V_{\widehat{h'}\widehat{h}} \to V_{\widehat{h'h}} ,\end{align} which by means of \eqref{eqnisovhC} can be seen as an automorphism of $\mathbb{C}$, i.e. an invertible complex number. By construction this is the number $\alpha_{\widetilde{Z_\theta(S,?)}} (h,h')$, i.e. the evaluation of 
	$\alpha_{\widetilde{Z_\theta(S,?)}} \in Z^2(\Pi(S,BG);\U(1))$ on the 2-simplex defined by the composable pair $(h,h')$. 
	
	By definition of $Z_\theta$ we find
	\begin{align}
	\alpha_{\widetilde{Z_\theta(S,?)}} (h,h') = \langle H^* \theta , \nu \rangle,
	\end{align} where
	\begin{itemize}
		\item $\nu$ is a fundamental cycle of $S\times [0,1]^2$ such that (see equation \eqref{EQ: Condition Fundamental cycle on manifolds with corners})\small
\begin{align}
\partial \nu = (-1)^{\dim S} \sigma_S \times \left( \{0\} \times [0,1]-\{1\} \times [0,1]-[0,1]\times \{0\}+[0,1/2]\times \{1\} +[1/2,1]\times \{1\}  \right)\label{eqndelnu}
\end{align}\normalsize 
		
		\item and $H: \widehat{h'} \widehat{h} \to \widehat{h'h}$ is a homotopy relative boundary. 
		
		\end{itemize}
Using the triangulation of $[0,1]^2$ given by	
	
\begin{center}
	\begin{overpic}[width=8cm,
		scale=0.3]{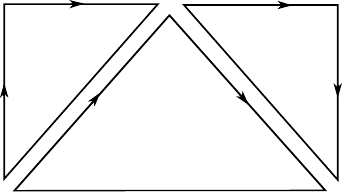}
\put(48,18){{$\nu_\Delta$}}
\put(10,40){{$\nu_-$}}
\put(85,40){{$\nu_+$}}
	\end{overpic}	 	
\end{center}
we get a fundamental cycle $\nu_\Box = \nu_- + v_\Delta + \nu_+$ of $[0,1]^2$.
Then $\nu :=  \sigma_S \times \nu_\Box$ satisfies \eqref{eqndelnu}. To get a representative for $H$ we pick a 2-simplex $\widetilde H : \Delta_2 \to BG^S$ such that $\partial_0\widetilde H = \widehat{h'}$, $\partial_1 \widetilde H = \widehat{h}$ and $\partial_2 \widetilde H = \widehat{h'h}$. This is possible since $BG^S$ is aspherical. Using the map $\Box : [0,1]^2 \to BG^S$ sketched in figure \ref{Fig:Sketch Box map} we define
\begin{align}
 H : S \times [0,1]^2 \xrightarrow{\id \times \Box} S \times BG^S \xrightarrow{\text{ev}\circ T} BG,
\end{align} where $\text{ev}: BG^S \times S \to BG$ denotes the evaluation. 
\begin{figure}[h]
	\centering
	\begin{overpic}[width=12cm,
		scale=0.3]{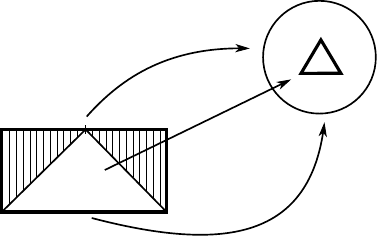}
		\put(35,50){{$\hat h_2 \circ \hat h_1$}}
		\put(50,3){{$\widehat{h_2 \circ  h_1}$}}
		\put(80,55){{$BG^S$}}
		\put(82,40){{$\Delta_{h_2, h_1}$}}
		\put(50,32){{$\tilde{H}$}}
	\end{overpic}
	\caption{Sketch for the definition of $\Box \colon [0,1^2]\rightarrow BG^S$. The map is constant along the vertical lines.}
	\label{Fig:Sketch Box map}	
\end{figure} 

Now we find 
	\begin{align}
	\alpha_{\widetilde{Z_\theta(S,?)}} (h,h') &= \langle H^* \theta , \nu \rangle \\ &= \langle \theta,H_*( \sigma_S \times \nu_-)\rangle +  \langle \theta,H_*( \sigma_S \times\nu_\Delta)\rangle +\langle \theta,H_*( \sigma_S \times \nu_+)\rangle.
	\end{align} Without loss of generality we can work with a normalized representative for $\theta$ which then vanishes on the degenerate simplices $H_*\nu_-$ and $H_*\nu_+$. Hence, we are left with
	\begin{align}
		\alpha_{\widetilde{Z_\theta(S,?)}} (h,h') = \langle \theta,H_*\nu_\Delta\rangle = \langle \text{ev}^* \theta,T_*(\sigma_S \times \Box_* \nu_\Delta)\rangle = \langle \text{ev}^* \theta, \widetilde H \times \sigma_S \rangle = \tau_S \theta (h,h').
	\end{align} This proves the assertion.
	\end{proof}

\section{Twisted equivariant Dijkgraaf-Witten theories\label{twisteddwtheories}}
Dijkgraaf-Witten theory is an extended topological quantum field theory that can be associated to a finite group $G$. It is based on the ideas in \cite{dijkgraafwitten} and was developed further in \cite{freedquinn} and \cite{morton1}. In this section we explain how twisted Dijkgraaf-Witten theories are obtained by orbifoldization of extended equivariant topological quantum field theories coming from cocycles.

For $\theta \in H^n(BG; \U(1))$ the associated extended field theory $Z_\theta : G\text{-}\Cob(n,n-1,n-2)\to \Tvs$ can be seen as a classical gauge theory with topological twist $\theta$. The corresponding quantized theory can be obtained by the orbifold construction given in \cite{extofk}, i.e. by means of applying the orbifoldization functor
$?/G$ from $n$-dimensional extended $G$-equivariant topological field theories to $n$-dimensional extended ordinary (i.e. non-equivariant) topological field theories. The orbifoldization combines a sum over twisted sectors with the computation of (homotopy) invariants by means of the parallel section functor developed in \cite{swpar}.

\begin{definition}\label{DW-theories}
For a finite group $G$ and $\theta\in H^n(BG; \U(1))$ the \emph{$n$-dimensional $\theta$-twisted Dijkgraaf-Witten theory 
\[
\DW _\theta : \Cob(n,n-1,n-2) \to \TwoVect
\]
with gauge group $G$} is defined to be the orbifold theory $Z_\theta / G$ of the extended field theory $Z_\theta : G\text{-}\Cob(n,n-1,n-2)\to \Tvs$ associated to $\theta$.  
\end{definition}

\noindent This generalizes the description of the non-extended twisted Dijkgraaf-Witten theory in \cite[Example~3.48]{schweigertwoikeofk} and the construction of three-dimensional extended twisted Dijkraaf-Witten theory in \cite{morton1}, as we will explain in more detail below. 

Let us first give the following formula for the number of simple objects in the category $\DW_\theta(\mathbb{T}^{n-2})$ obtained by evaluation of the twisted Dijkgraaf-Witten theory on the $n-2$-dimensional torus $\mathbb{T}^{n-2}$. It follows as a special case from \cite[Theorem~4.21]{extofk}: 

\begin{proposition}\label{satznumberofsimples}
	For a finite group $G$, $\theta\in H^n(BG; \U(1))$
	\begin{align}
	\# \{ \text{simple objects of $\DW_\theta(\mathbb{T}^{n-2})$} \} = \frac{1}{|G|} \sum_{\substack{  g_1,\dots,g_n \in G \\ \text{mutually commuting}  }} \langle  \psi_{g_1,\dots,g_n}^* \theta, \mu_{\mathbb{T}^n}   \rangle    \ , \label{eqnnumberofsimples}
	\end{align} where
	\begin{itemize}
		
		
		\item $\psi_{g_1,\dots,g_n}: \mathbb{T}^n \to BG$ is a classifying map for the $G$-bundle $P$ over $\mathbb{T}^n$ specified by the holonomy values $g_1,\dots,g_n \in G$,
		
		\item $\mu_{\mathbb{T}^n}$ is the fundamental class of the torus.
	
		\end{itemize}
	\end{proposition}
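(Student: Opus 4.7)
The plan is to derive the formula by combining three ingredients: (i) the general principle that the number of simple objects of $\DW_\theta(\mathbb{T}^{n-2})$ can be computed by evaluating on a closed $n$-manifold, (ii) the orbifold construction applied to this $n$-manifold, and (iii) the explicit non-extended formula for $Z_\theta$ from Remark~\ref{Rem: Non extended theory}. This is precisely the content of Theorem~4.19 of \cite{extofk}; the task is to spell out why the specialization yields \eqref{eqnnumberofsimples}.

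First I would reduce the categorical count to a numerical invariant. Since $\DW_\theta$ takes values in $\Tvs$ and the image is semisimple (by construction, via orbifoldization), the number of simple objects of $\DW_\theta(\mathbb{T}^{n-2})$ equals the two-fold categorical dimension, which by the standard $S^1$-trace argument is computed as $\DW_\theta(\mathbb{T}^{n-2}\times S^1\times S^1)=\DW_\theta(\mathbb{T}^n)$. In other words, counting simple objects is the same as evaluating the orbifolded theory on the closed $n$-torus.

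Next I would apply the orbifold formula for closed $n$-manifolds from \cite{extofk}. By definition $\DW_\theta = Z_\theta/G$, and on a closed $n$-manifold $M$ the orbifold value is the groupoid integral
\begin{align}
(Z_\theta/G)(M)=\sum_{[\psi]\in\pi_0\Pi(M,BG)}\frac{Z_\theta(M,\psi)}{|\Aut(\psi)|}.
\end{align}
For $M=\mathbb{T}^n$ the bundle groupoid $\Pi(\mathbb{T}^n,BG)$ is equivalent to the action groupoid of $G$ acting by simultaneous conjugation on the set of mutually commuting $n$-tuples $(g_1,\dots,g_n)$. The automorphism group of the bundle classified by $\psi_{g_1,\dots,g_n}$ is the common centralizer $C_G(g_1,\dots,g_n)$, so by orbit-stabilizer the groupoid cardinality rearranges to $\tfrac{1}{|G|}\sum_{g_1,\dots,g_n \text{ comm.}}$. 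Finally, by Remark~\ref{Rem: Non extended theory} the non-extended restriction of $Z_\theta$ satisfies $Z_\theta(\mathbb{T}^n,\psi)=\langle\psi^*\theta,\mu_{\mathbb{T}^n}\rangle$, and inserting this produces the right-hand side of \eqref{eqnnumberofsimples}.

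The main obstacle is the first step: justifying that counting simple objects in the KV 2-vector space $\DW_\theta(\mathbb{T}^{n-2})$ genuinely corresponds to evaluation on $\mathbb{T}^n$ in the orbifold theory, and that this is compatible with the categorical trace. For a general symmetric monoidal 2-functor into $\Tvs$ this requires checking that the two $S^1$-factors implement taking the categorical trace and then the ordinary trace of the identity functor; this is exactly what is established in the proof of \cite[Theorem~4.19]{extofk}, so the remainder of the argument amounts to specializing that theorem to $S=\mathbb{T}^{n-2}$ and to the field theory $Z_\theta$ constructed in Section~\ref{secconstruchqft}.
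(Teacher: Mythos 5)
Your proposal is correct and follows essentially the same route as the paper, which gives no argument of its own beyond citing \cite[Theorem~4.19]{extofk}; your expansion of that specialization (trace over two circle factors reducing the count of simples to $\DW_\theta(\mathbb{T}^n)$, the orbifold groupoid integral over $\Pi(\mathbb{T}^n,BG)\simeq$ commuting $n$-tuples modulo conjugation, orbit--stabilizer to get the $\tfrac{1}{|G|}\sum$ form, and the primitive-theory value $\langle\psi^*\theta,\mu_{\mathbb{T}^n}\rangle$ from Remark~\ref{Rem: Non extended theory}) is exactly the intended derivation.
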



\noindent It is worth looking at Definition~\ref{DW-theories} for $n=2$:



\begin{proposition}\label{dwtheorydim2}
For any finite group $G$ and $\theta\in H^2(BG; \U(1))$ the evaluation of the topological quantum field theory $\DW _\theta = Z_\theta / G: \Cob(2,1,0) \to \TwoVect$ on the point is given by the category of $\theta$-twisted projective representations of $G$.
\end{proposition}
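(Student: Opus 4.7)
The plan is to unpack $\DW_\theta(\star) = (Z_\theta/G)(\star)$ using the orbifold construction from \cite{extofk}, and then to recognize the resulting category as that of $\theta$-twisted projective representations of $G$.

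First, the orbifold theory $Z_\theta/G$ evaluated at an object is, by construction, obtained as the category of parallel sections of the 2-vector bundle obtained by restricting $Z_\theta$ to the groupoid of $G$-bundles on that object. For a single point $\star$, this groupoid is canonically equivalent to $\ast \DS G$, the one-object groupoid with automorphism group $G$. Hence it suffices to describe the 2-line bundle $Z_\theta(\star,-) : \ast \DS G \to \TwoVect$ explicitly and compute its parallel sections.

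Second, I would apply Theorem~\ref{Theorem: Transgression} to the $0$-dimensional manifold $S=\star$. Since transgression along a point reduces under the identification $(BG)^\star \simeq BG$ to the identity, the 2-line bundle $Z_\theta(\star,-)$ is classified by $\tau_\star \theta \in H^2(\ast\DS G; \U(1)) = H^2(G; \U(1))$, which coincides with the cocycle $\theta$ itself under the standard isomorphism $H^2(BG; \U(1)) \cong H^2(G; \U(1))$. Thus the 2-line bundle we need to analyse is the one represented by the coherence data $\theta(g_1,g_2) \in \C^\times$ in the factorisation \eqref{EQ: Classification of 2-line bundles}.

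Third, I would unpack parallel sections of a 2-line bundle over $\ast \DS G$ classified by a cocycle $\alpha \in Z^2(G; \U(1))$. Such a section consists of an object $V \in \fvs$ together with isomorphisms $\rho(g) : V \to V$ for each $g \in G$, subject to the coherence condition imposed by $\alpha$, namely
\begin{align}
\rho(g_1)\circ \rho(g_2) = \alpha(g_1,g_2)\, \rho(g_1 g_2),
\end{align}
while morphisms of parallel sections are exactly $G$-intertwiners. Specialised to $\alpha = \theta$, this yields precisely the category of $\theta$-twisted projective representations of $G$.

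The main obstacle, and the step requiring the most care, is the bookkeeping: one must verify that the coherence data extracted from the classification diagram \eqref{EQ: Classification of 2-line bundles} applied to $Z_\theta(\star,-)$ produces the cocycle $\theta$ on the nose (and not a coboundary-shifted representative), and that the parallel section functor from \cite{swpar} specialises to the naive description above when evaluated on this particularly simple groupoid and 2-line bundle. Once conventions are aligned, the identification with $\theta$-twisted projective representations is a direct unpacking.
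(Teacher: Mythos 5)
Your proposal is correct and follows essentially the same route as the paper: identify the groupoid of $G$-bundles on the point with $\star\DS G$, invoke Theorem~\ref{Theorem: Transgression} (where transgression is the identity) to see that $Z_\theta(\star,?)$ is the 2-line bundle classified by $\theta$, and then recognize its parallel sections as $\theta$-twisted projective representations. The paper likewise defers the final unpacking of parallel sections to \cite[Section~3.4]{MS}, so your explicit description of a section as $(V,\rho)$ with $\rho(g_1)\rho(g_2)=\theta(g_1,g_2)\rho(g_1g_2)$ is, if anything, slightly more detailed than the published argument.
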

 
\begin{proof}
The groupoid $\Pi(\star, BG)$ is equivalent to the groupoid $\star \DS G$ with one object and automorphism group $G$. By Theorem \ref{Theorem: Transgression} (note that the transgression is the identity in that case) we find that the 2-vector bundle $Z_\theta(\star,?)$ is given by
\[
 \star \DS G \overset{\theta}{\to} \fvs \DS \id \DS \C^\times \to \Tvs \ ,
\] where $\theta$ is understood as a 2-functor. 
According to the definition of the orbifold construction in \cite{extofk},
 the 2-vector space $\DW _\theta(\star)$ is given by the category of 1-morphisms from the trivial line bundle over $ \star \DS G$ to $Z_\theta(\star,?)$, i.e. by the parallel sections of $Z_\theta(\star,?)$. 
Spelling this out we see that $\DW _\theta(\star)$ is the category of projective representation twisted by $\theta$, see \cite[Section 3.4]{MS} for more details. 
\end{proof}

\begin{example}
	Given the explicit description of $\DW _\theta(\star)$ provided by Proposition~\ref{dwtheorydim2} in the two-dimensional case, we can compute the number of irreducible $\theta$-twisted representation of $G$ by using Proposition~\ref{satznumberofsimples}. The right hand side of \eqref{eqnnumberofsimples}, i.e. the value of $\theta$-twisted Dijkgraaf-Witten theory on the torus, already appears in (6.40) of \cite{dijkgraafwitten}, although we should note that the reasoning in the proof of Proposition~\ref{satznumberofsimples} is only valid because we have described twisted two-dimensional Dijkgraaf-Witten theory as an \emph{extended} quantum field theory. 
Now  \eqref{eqnnumberofsimples} reduces to
	\begin{align}
	\# \{ \text{irreducible $\theta$-twisted representation of $G$} \}  = \frac{1}{|G|}\sum_{gh=hg} \frac{\theta(h,g)}{\theta(g,h)}
	\end{align} and hence to the result found in \cite[Corollary 13]{willterongerbesgrpds} by algebraic methods. 
	\end{example}

\noindent As a special case of Theorem~\ref{Theorem: Equivariant category on S1} below we will find that the evaluation $\DW_\theta(\mathbb{S}^1)$ of the 3-2-1-dimensional $\theta$-twisted Dijkgraaf-Witten theory on the circle is given by the category of $\tau_{\mathbb{S}^1}\theta$-twisted representations of the action groupoid $G\DS G$. By \cite[Proposition~8 and Theorem~17]{willterongerbesgrpds} this category is the representation category of the twisted Drinfeld double \cite{definition twisted Drinfeld double}. Hence, we have proven:
\begin{theorem}[]
For any finite group $G$ and $\theta\in H^3(BG; \U(1))$ the evaluation of $\DW _\theta = Z_\theta / G$ on the circle is given by the representation category of the $\theta$-twisted Drinfeld double of $G$. 
\end{theorem}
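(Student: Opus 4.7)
The plan is to combine two results that are either proved elsewhere in the paper or cited from the literature, in a way that is essentially prescribed by the paragraph preceding the theorem. First, Theorem~\ref{Theorem: Equivariant category on S1} (applied to the circle) identifies $\DW_\theta(\mathbb{S}^1)$ with the category of $\tau_{\mathbb{S}^1}\theta$-twisted representations of the action groupoid $G\DS G$ for the adjoint action of $G$ on itself. Second, \cite[Proposition~8 and Theorem~17]{willterongerbesgrpds} identify this same category with the representation category of the $\theta$-twisted Drinfeld double $D^\theta G$. Composing the two equivalences is precisely the content of the theorem, so no further geometric construction is required.

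To spell out the first step, I would unpack the orbifold construction from \cite{extofk}: by definition $(Z_\theta/G)(\mathbb{S}^1)$ is the 2-vector space of parallel sections of the 2-line bundle $Z_\theta(\mathbb{S}^1,?) \colon \Pi(\mathbb{S}^1,BG) \to \Tvs$, and Theorem~\ref{Theorem: Transgression} classifies this 2-line bundle by $\tau_{\mathbb{S}^1}\theta \in H^2(\Pi(\mathbb{S}^1,BG);\U(1))$. Under the standard equivalence $\Pi(\mathbb{S}^1,BG) \simeq G\DS G$ (a $G$-bundle on $\mathbb{S}^1$ is determined up to isomorphism by its holonomy, and automorphisms are given by centralizer elements), the parallel sections of a 2-line bundle classified by a cocycle $\alpha$ are by definition $\alpha$-twisted representations of $G\DS G$. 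The second step is then a direct citation of Willerton's algebraic identification.

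The main subtlety, and the only non-formal point in the argument, is cocycle-level compatibility: one must verify that the transgressed groupoid 2-cocycle $\tau_{\mathbb{S}^1}\theta$ on $G\DS G$ obtained geometrically from Section~\ref{sectransgression} agrees, up to a coboundary, with the specific 2-cocycle appearing in the standard presentation of $D^\theta G$. This is precisely the content of Willerton's comparison, so once we are entitled to invoke \cite{willterongerbesgrpds} at the level of cocycles the theorem follows with no additional computation by concatenating the two equivalences.
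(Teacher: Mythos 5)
Your proposal matches the paper's own argument: the theorem is obtained there exactly by specializing Theorem~\ref{Theorem: Equivariant category on S1} (via the orbifold/pushforward construction and Theorem~\ref{Theorem: Transgression}) to identify $\DW_\theta(\mathbb{S}^1)$ with $\tau_{\mathbb{S}^1}\theta$-twisted representations of $G\DS G$, and then citing \cite[Proposition~8 and Theorem~17]{willterongerbesgrpds} for the identification with the representation category of the twisted Drinfeld double. Your remark on cocycle-level compatibility is precisely the content of the cited results of Willerton, so no gap remains.
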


\noindent This result implies for instance that we can get from Proposition~\ref{satznumberofsimples} an easy topological proof of the formula for the number of irreducible representations of the $\theta$-twisted Drinfeld double given in \cite[Theorem 21]{willterongerbesgrpds}. 

Moreover, we see that the $\theta$-twisted Dijkgraaf-Witten theory of Definition~\ref{DW-theories} generalizes the $\theta$-twisted Dijkgraaf-Witten theory given in \cite{morton1} for the 3-2-1-dimensional case to arbitrary dimension because they yield the same modular category upon evaluation on the circle, which is sufficient by the classification result of \cite{BDSPV15}.

As a generalization of the orbifold construction, we get for any morphism $\lambda : H \to J$ of finite groups a \emph{pushforward map} $\lambda_*$ from $H$-equivariant to $J$-equivariant topological field theories, see \cite[Section~6]{schweigertwoikeofk} for the non-extended case and \cite[Section~3.3]{extofk} for the extended case needed here. We use this construction to define a new class of extended field theories generalizing work of \cite{maiernikolausschweigerteq}:

\begin{definition}\label{defgendwtheory}
Let $\lambda : H \to J$ be a morphism of finite groups and $\theta\in H^n(BH;\U(1))$.
The \emph{$\theta$-twisted $J$-equivariant Dijkgraaf-Witten theory} $\DW^\lambda_\theta := \lambda_* Z_\theta : J\text{-}\Cob(n,n-1,n-2) \to \TwoVect$ is defined to be the pushforward of $Z_\theta$ along $\lambda$. 
\end{definition}

Consider a 3-dimensional extended $J$-equivariant topological quantum field theory $Z$. We pick a base point of $\mathbb{S}^1$ and identify a principal bundle with its holonomy around $\mathbb{S}^1$ in the direction induced by the orientation. This identifies the groupoid of $J$-bundles on $\mathbb{S}^1$ with the action groupoid $J\DS J $.
It is convenient to form the category
\begin{align}
\mathcal{C}^Z = \bigoplus_{j\in J} Z(\mathbb{S}^1,j),
\end{align}   
where we identify a bundle with its holonomy. 
As proven in \cite[Theorem~4.33~(b)]{extofk}, $\mathcal{C}^Z$ carries the structure of a $J$-multimodular category. By \cite[Theorem~4.33~(a)]{extofk}, $\mathcal{C}^Z$ is $J$-modular if and only if the tensor unit of $\mathcal{C}^Z$ is simple. 
Before calculating $\mathcal{C}^{\text{DW}_\theta^\lambda}$ we recall the following definition:

\begin{definition}
Let $\Gamma$ be a groupoid and $\alpha \in H^2(\Gamma; \C^\times)$ a cohomology class represented by a normalized 2-cocycle (also denoted by $\alpha$). A \emph{projective functor} $F     :     \Gamma \to \fvs$ with respect to $\alpha$ assigns to every object $g\in \Gamma$ a vector space $F(g)\in \fvs$ and to a morphism $\varphi     :     g_1 \to g_2$ a linear map $F(\varphi)    :     F(g_1)\to F(g_2)$, such that
\begin{align*}
F(\varphi_2) \circ F(\varphi_1)= \alpha(\varphi_2,\varphi_1) F(\varphi_2 \circ \varphi_1)
\end{align*}
for composable morphisms $\varphi_1,\varphi_2 \in \Gamma$ and
\begin{align}
F(\id_g)= \id_{F(g)} \
\end{align}
for all $g\in \Gamma$.
Natural transformation between projective functors can be defined as usual.  
We denote by $[\Gamma, \fvs]^\alpha$ the 2-vector space of projective functors with respect to $\alpha$.   

\end{definition}

\noindent By chasing through the definition of parallel sections of a 2-vector bundle in \cite{swpar} and using (3.15) and (3.16) in \cite{MS}, we can observe the following: 

\begin{lemma}\label{Lemma: Charkterisation of twisted functors}
Given a normalized groupoid 2-cocycle $\alpha$ described by a 2-functor \begin{align} \alpha     :     \Gamma \to \fvs \DS \id \DS \C^\times \to \Tvs, \end{align} the 2-vector space $[\Gamma, \fvs]^\alpha$ of $\alpha$-projective functors can be equivalently described as
the 2-vector space of parallel section of $\alpha$ seen as a 2-vector bundle in the sense of \cite{swpar}, i.e. as 2-vector space of 2-natural transformations from the trivial 2-vector bundle to $\alpha$.   
\end{lemma}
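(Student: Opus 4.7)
The plan is to unpack both sides of the claimed equivalence and match the data directly. On the right-hand side, a parallel section of $\alpha$ viewed as a $2$-vector bundle $\Gamma \to \Tvs$ is, by the definition recalled in \cite{swpar}, a $2$-natural transformation $\eta$ from the trivial $2$-vector bundle $\underline{\fvs}\colon \Gamma \to \Tvs$ (sending every object to $\fvs$ and every morphism to $\id_{\fvs}$) to $\alpha$. Such an $\eta$ consists of (i) a $1$-morphism $\eta_g \colon \fvs \to \fvs$ in $\Tvs$ for each object $g \in \Gamma$, and (ii) an invertible $2$-morphism (the naturator) $\eta_\varphi \colon \alpha(\varphi) \circ \eta_{g_1} \Rightarrow \eta_{g_2} \circ \id_{\fvs}$ for each morphism $\varphi\colon g_1 \to g_2$ in $\Gamma$, subject to the usual coherence conditions for composable morphisms and identities.

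First I would spell out what this data reduces to for the factorization $\alpha\colon \Gamma \to \fvs\DS\id\DS\C^\times \hookrightarrow \Tvs$. Since the image of $\alpha$ on objects is always $\fvs$ and on $1$-morphisms is always $\id_{\fvs}$, each component $\eta_g$ is a $1$-endomorphism of $\fvs$ in $\Tvs$, which (up to canonical equivalence) is exactly a finite-dimensional vector space $F(g) := \eta_g \in \fvs$. Each naturator $\eta_\varphi$ becomes a linear map $F(\varphi)\colon F(g_1) \to F(g_2)$. Then I would write down the coherence constraint for a composable pair $g_1 \xrightarrow{\varphi_1} g_2 \xrightarrow{\varphi_2} g_3$: using formulas (3.15) and (3.16) in \cite{MS} (which encode how the compositor of $\alpha$ is precisely the scalar $\alpha(\varphi_2,\varphi_1)\in\C^\times$), the $2$-naturality square forces
\begin{align}
F(\varphi_2)\circ F(\varphi_1) = \alpha(\varphi_2,\varphi_1)\, F(\varphi_2\circ\varphi_1),
\end{align}
which is exactly the $\alpha$-projective functor condition. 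The unitality coherence, together with the normalization of $\alpha$, yields $F(\id_g)=\id_{F(g)}$.

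Next I would check the morphism level: a modification between two parallel sections $\eta, \eta'$ consists, by \cite{swpar}, of $2$-morphisms $\eta_g \Rightarrow \eta'_g$ in $\Tvs$ (i.e.\ linear maps $F(g)\to F'(g)$) compatible with the naturators. Unwinding again using (3.15)–(3.16) in \cite{MS}, the modification axiom reads exactly as the naturality square for an $\alpha$-projective natural transformation; the scalars $\alpha(\varphi_2,\varphi_1)$ appearing on both sides cancel, leaving the ordinary naturality condition. Hence modifications correspond bijectively to natural transformations of projective functors, and the assignment $\eta\mapsto F$ is fully faithful.

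Finally I would verify essential surjectivity by running the construction in reverse: given an $\alpha$-projective functor $F$, one sets $\eta_g := F(g)$ and $\eta_\varphi := F(\varphi)$, and checks that the coherence conditions for a $2$-natural transformation against $\alpha$ are satisfied precisely because of the $\alpha$-projectivity relation. Additivity of the assignment in both directions shows the $2$-vector space structures agree. The main obstacle I anticipate is purely bookkeeping: carefully aligning the normalization conventions in \cite{swpar} and \cite{MS} with the signs and associators used here, so that the scalar on the right-hand side of the coherence square is genuinely $\alpha(\varphi_2,\varphi_1)$ and not its inverse.
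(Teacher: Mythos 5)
Your proposal is correct and follows essentially the same route as the paper, which offers no written-out proof but states that the lemma is obtained ``by chasing through the definition of parallel sections of a 2-vector bundle in [SW17b] and using (3.15) and (3.16) in [MS17]'' --- exactly the unwinding of components, naturators, and coherence squares that you carry out. The only point worth noting is that your assumption that the naturators are invertible is harmless here, since over a groupoid the projectivity relation together with $F(\id_g)=\id_{F(g)}$ forces each $F(\varphi)$ to be invertible anyway.
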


\begin{theorem}\label{Theorem: Equivariant category on S1}
Let $\lambda     :     H \to J$ be a morphism of finite groups and $\theta\in H^3(BH;\U(1))$. 
\begin{enumerate}
	\item The category assigned to $\mathbb{S}^1$ by the extended $J$-equivariant $\theta$-twisted Dijkgraaf-Witten theory $\DW^\lambda_\theta$  is given by
\begin{align}\label{Equation: Equivariant category on S1}
\mathcal{C}^{\DW_\theta^\lambda} = \bigoplus_{j\in J} [\lambda_*^{-1}[j], \fvs]^{q_j^* \tau_{\mathbb{S}^1}\theta }\ , 
\end{align}
where $\lambda_*^{-1}[j]$ is the homotopy fiber over $j\in J$ under the functor $\lambda_*     :     H\DS H \to J\DS J$ induced by $\lambda$ and $q_j    :     \lambda_*^{-1}[j] \to H\DS H $ is part of the structure of the homotopy fiber. 
The category \eqref{Equation: Equivariant category on S1} carries a $J$-multimodular structure.

\item If $\lambda$ is surjective, \eqref{Equation: Equivariant category on S1} reduces to
\begin{align}\label{Equation: lambda surjective}
\mathcal{C}^{\DW_\theta^\lambda} = \bigoplus_{j\in J}\  [\lambda^{-1}(j)\DS \ker \lambda, \fvs]^{\tau_{\mathbb{S}^1}\theta|_{\ker \lambda}} \ ,
\end{align} where $\lambda^{-1}(j)$ is the preimage of $j\in J$ under the group morphism $\lambda$. The category \eqref{Equation: lambda surjective} is $J$-modular.
\end{enumerate}
\end{theorem}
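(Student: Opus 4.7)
My plan is to combine the pushforward construction of \cite{extofk} with the transgression identification of Theorem~\ref{Theorem: Transgression}. By definition $\DW_\theta^\lambda = \lambda_* Z_\theta$, and the pushforward of an $H$-equivariant extended field theory along $\lambda\colon H\to J$ (cf.\ \cite[Remark~3.5]{extofk}) evaluates on a $J$-bundle $\xi\colon \mathbb{S}^1\to BJ$ as the 2-vector space of parallel sections of $Z_\theta(\mathbb{S}^1,?)$ pulled back to the homotopy fibre over $\xi$ of the functor on bundle groupoids induced by $\lambda$. Identifying $G$-bundles on $\mathbb{S}^1$ with their holonomies realises this induced functor as $\lambda_*\colon H\DS H\to J\DS J$, and summing over $j\in J$ produces the $J$-graded decomposition of $\mathcal{C}^{\DW_\theta^\lambda}$ appearing in the statement.

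To establish formula \eqref{Equation: Equivariant category on S1}, I next apply Theorem~\ref{Theorem: Transgression} with aspherical target $BH$: the 2-line bundle $Z_\theta(\mathbb{S}^1,?)\colon H\DS H\to \Tvs$ is classified by $\tau_{\mathbb{S}^1}\theta\in H^2(H\DS H;\U(1))$, so pulling back along $q_j$ gives the 2-line bundle classified by $q_j^*\tau_{\mathbb{S}^1}\theta$. Lemma~\ref{Lemma: Charkterisation of twisted functors} then identifies its parallel sections with the 2-vector space $[\lambda_*^{-1}[j],\fvs]^{q_j^*\tau_{\mathbb{S}^1}\theta}$ of projective functors, completing part~(1). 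The $J$-multimodular structure follows by direct invocation of \cite[Theorem~4.29]{extofk}.

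For part (2) the main input is an explicit computation of the homotopy fibre. Unwinding the definition, objects of $\lambda_*^{-1}[j]$ are pairs $(h,\phi)\in H\times J$ with $\phi\lambda(h)\phi^{-1}=j$, and a morphism $(h,\phi)\to(h',\phi')$ is a $k\in H$ with $khk^{-1}=h'$ and $\phi'=\phi\lambda(k)^{-1}$. When $\lambda$ is surjective, picking a preimage $k\in H$ of $\phi$ exhibits $(h,\phi)\cong (khk^{-1},e)$, so the resulting skeleton is precisely $\lambda^{-1}(j)\DS \ker\lambda$ and $q_j$ becomes the inclusion into $H\DS H$. Substituting into part~(1) yields \eqref{Equation: lambda surjective}.

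The step I expect to require the most care is the $J$-modularity. By \cite[Theorem~4.28]{extofk} it suffices to show simplicity of the tensor unit of $\mathcal{C}^{\DW_\theta^\lambda}$. The tensor unit lies in the $j=e$ sector, which by \eqref{Equation: lambda surjective} equals $[\ker\lambda\DS\ker\lambda,\fvs]^{\tau_{\mathbb{S}^1}\theta|_{\ker\lambda}}$; the same reasoning applied to the trivial quotient $\ker\lambda\to\{e\}$ identifies this with $\DW_{\theta|_{\ker\lambda}}(\mathbb{S}^1)$. The preceding theorem then presents this subcategory as the representation category of the $\theta|_{\ker\lambda}$-twisted Drinfeld double of $\ker\lambda$, which is modular; its tensor unit is therefore simple, and $J$-modularity of $\mathcal{C}^{\DW_\theta^\lambda}$ follows.
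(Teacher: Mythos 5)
Your argument follows the paper's proof almost verbatim: part (1) is obtained by combining the pushforward description of $\DW^\lambda_\theta(\mathbb{S}^1,?)$ as parallel sections over the homotopy fibre with Theorem~\ref{Theorem: Transgression} and Lemma~\ref{Lemma: Charkterisation of twisted functors}, multimodularity is quoted from \cite[Theorem~4.29]{extofk}, and your explicit computation of $\lambda_*^{-1}[j]$ for surjective $\lambda$ (choosing preimages to move every object to the form $(h,1)$ and observing that the remaining automorphisms lie in $\ker\lambda$) is exactly the paper's reduction to $\lambda^{-1}(j)\DS\ker\lambda$. The one place you diverge is the final claim of $J$-modularity: the paper deduces simplicity of the tensor unit directly from \cite[Proposition~4.34]{extofk}, whereas you identify the neutral sector with $[\ker\lambda\DS\ker\lambda,\fvs]^{\tau_{\mathbb{S}^1}(\theta|_{\ker\lambda})}$, hence with the representation category of the $\theta|_{\ker\lambda}$-twisted Drinfeld double of $\ker\lambda$ via \cite{willterongerbesgrpds}, and invoke the known modularity of that category. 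This is not circular (it uses only the already-established formula \eqref{Equation: lambda surjective} at $j=e$ plus Willerton's external identification, not the unnumbered Drinfeld-double theorem that the paper derives afterwards), but it does import the simplicity of the unit of $\mathrm{Rep}$ of a twisted Drinfeld double as an outside fact, and it tacitly uses the compatibility $\tau_{\mathbb{S}^1}(\theta|_{\ker\lambda})=(\tau_{\mathbb{S}^1}\theta)|_{\ker\lambda\DS\ker\lambda}$, i.e.\ naturality of transgression under restriction; both are true and easy, but worth stating. The paper's citation of \cite[Proposition~4.34]{extofk} is the more self-contained route.
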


\begin{proof}
	\begin{enumerate}
		\item 
Equation \eqref{Equation: Equivariant category on S1} can be obtained directly from Lemma~\ref{Lemma: Charkterisation of twisted functors} and Theorem~\ref{Theorem: Transgression}. The $J$-multi\-modu\-larity of $\mathcal{C}^{\DW_\theta^\lambda}$ follows from \cite[Theorem~4.33~(b)]{extofk} because the category comes from a $J$-equivariant topological field theory. 

\item If $\lambda    :     H\to J$ is surjective we can pick a set theoretical section of $s    :     J \to H$ of $\lambda$. Every object $(h, \tilde{j}     :     \lambda(h)\to j) \in \lambda_*^{-1}[j]$ in the homotopy fiber is isomorphic to $(s(\tilde{j})h s(\tilde{j})^{-1}, 1)$ by the isomorphism $s(\tilde{j})$. For this reason $\lambda_*^{-1}[j]$ is equivalent to its full subgroupoid consisting of objects of the form $(h,1)$ with $h\in \lambda^{-1}(j)$. A morphism $\tilde{h}    :     (h,1) \to (h'=\tilde{h} h \tilde{h}^{-1},1)$ satisfies
\begin{equation}
\begin{tikzcd}
\lambda (h) \ar[swap]{rd}{1} \ar{rr}{\lambda (\tilde{h})} & & \lambda (h') \ar{ld}{1} \\
 & j &
\end{tikzcd} \ . 
\end{equation}
This implies that $\tilde{h} \in \text{ker}(\lambda)$ and hence $\lambda_*^{-1}[j]\cong \lambda^{-1}(j)\DS \text{ker} (\lambda) $, which proves \eqref{Equation: lambda surjective}. In order to see that \eqref{Equation: lambda surjective} is in fact $J$-modular, it suffices to show that the unit of \eqref{Equation: lambda surjective} is simple. This follows from \cite[Proposition~4.29]{extofk}. Again, surjectivity of $\lambda$ enters.
\end{enumerate}
\end{proof}

\begin{remark}
\begin{enumerate}

\item 
For $\theta=0$ and surjective $\lambda$  this result agrees with \cite[Proposition~3.22]{maiernikolausschweigerteq}. Hence, our construction is a natural extension to non-trivial cocycles. 

\item Let us give the following physical interpretation of the techniques developed in this section: Suppose that we are given a finite group $G$ and a 3-cocycle $\omega \in H^3(BG;\U(1))$. This data gives rise to a 3-2-1-dimensional topological quantum field theory, namely the $\omega$-twisted Dijkgraaf-Witten theory $\DW_\omega$, see Definition~\ref{DW-theories}. Suppose now we are given a weak action of a finite group $J$ on $G$, see \cite[Definition~3.1]{maiernikolausschweigerteq}, preserving $\omega$. Such a weak action should be seen as a symmetry of our theory. We can ask whether it is possible to gauge this $J$-symmetry, i.e. whether we can find a $J$-equivariant topological quantum field theory whose neutral sector is given by $\DW_\theta$. 
To this end, we note as in \cite{maiernikolausschweigerteq} that the weak $J$-action on $G$ amounts to a short exact sequence
\begin{align}
1 \to G \to H \stackrel{\lambda}{\to} J \to 1\end{align} of finite groups. If there is a $\theta \in H^3(BH;\U(1))$ such that 
\begin{align}
\theta|_G=\omega,\label{eqnfindtheta}
\end{align} then indeed, by Theorem~\ref{Theorem: Equivariant category on S1} $\DW_\omega$ occurs as the neutral sector of $\DW_\theta^\lambda$. 
In general, there are obstructions for finding such a $\theta$ as in \eqref{eqnfindtheta}. Physically, this is the appearance of so-called \emph{'t Hooft anomalies}, see \cite{MSb} for more details.

\item
Our construction can be generalized as follows: Given a sequence of finite groups 
\begin{align}
G_0 \overset{\lambda_1}{\longrightarrow} G_1 \overset{\lambda_2}{\longrightarrow} \dots \xrightarrow{\lambda_{n}} G_n
\end{align}
and 3-cocycles $\theta_j \in H^3(BG_j;\U(1))$ for $0\le j\le n$ we can construct the $G_n$-equivariant topological quantum field theory
\begin{align}
{\lambda_{n}}_*(\cdots ({\lambda_2}_*({\lambda_1}_* Z_{\theta_0}\otimes Z_{\theta_1})\otimes \cdots )\otimes Z_{\theta_{n-1}}) \otimes Z_{\theta_n} \ .
\end{align}
Corresponding to this theory there exists a potentially interesting $G_n$-multimodular tensor category.  
\end{enumerate}
\end{remark}


\begin{thebibliography}{dassolltesob}
	
	
	\bibitem[BD95]{baezdolan}
	J. C. Baez, J. Dolan.
	\emph{Higher-dimensional Algebra and Topological Quantum Field Theory}.
J.Math.Phys. 36 (1995) 6073-6105.
	
	
	
	
	
	
\bibitem[BDSPV15]{BDSPV15}
B. Bartlett, C. L. Douglas, C. J. Schommer-Pries, J. Vicary.
\emph{Modular Categories as Representations of the 3-dimensional Bordism Category}. arXiv:1509.06811 [math.AT], 2015.
	
	
		\bibitem[Bre93]{bredon}
	G. E. Bredon.
	\emph{Topology and Geometry}.
	Springer Graduate Texts in Mathematics, 1993.
	
		
	\bibitem[BTW04]{bunketurnerwillerton}
	U. Bunke, P. Turner, S. Willerton.
	\emph{Gerbes and homotopy quantum field theories}. Algebraic \& Geometric Topology, Volume 4 (2004), 407--437.

	
	\bibitem[BW]{BunkWaldorf}
	S. Bunk, K. Waldorf.
	In preparation.
	
\bibitem[DPR90]{definition twisted Drinfeld double}
R. Dijkgraaf, V. Pasquier and P. Roche.
\emph{Quasi Hopf algebras, group cohomology and orbifold
models}.
Nucl. Phys. B (Proc. Suppl) 18B (1990) 60–72.
	
		\bibitem[DW90]{dijkgraafwitten}
	  R. Dijkgraaf, E. Witten.
		\emph{Topological Gauge Theories and Group Cohomology}.
		Commun. Math. Phys. 129, 393-429 (1990).
	
			\bibitem[FQ93]{freedquinn}
	  D. S. Freed, F. Quinn.
		\emph{Chern-Simons Theory with Finite Gauge Group}.
		Commun. Math. Phys. 156, 435-472 (1993).

\bibitem[Hat02]{AT}
A. Hatcher.
\emph{Algebraic topology}.
Cambridge University Press, 2002.
	

\bibitem[KV94]{KV}
M.~Kapranov and V.~Voevodsky.
\emph{Braided monoidal 2-categories and Manin-Schechtman higher braid
  groups}.
Journal of Pure Applied Algebra, 92:241{\textendash}267, 1994.		
		
		
			\bibitem[Kir04]{kirrilovg04}
		A. A. Kirillov.
		\emph{On $G$-equivariant modular categories}. arXiv:math/0401119v1 [math.QA], 2004.
		
			\bibitem[Lur09]{lurietft}
						J. Lurie. \emph{On the Classification of Topological Field Theories}.  	arXiv:0905.0465 [math.CT]

		\bibitem[Mac71]{MacLane}
		S. Mac Lane.
		\emph{Categories for the Working Mathematician}. Graduate Texts in Mathematics, 1971.

		\bibitem[MNS12]{maiernikolausschweigerteq}
	J. Maier, T. Nikolaus, C. Schweigert.
	\emph{Equivariant Modular Categories via Dijkgraaf-Witten theory}. Adv. Theor. Math. Phys. 16 (2012) 289-358.  	
	
	
		\bibitem[Mor15]{morton1}
		J. C. Morton.
		\emph{Cohomological Twisting of 2-Linearization and Extended TQFT}. J. Homotopy Relat. Struct. 10 (2015), 127-187.
		 

\bibitem[Mor11]{Mor11}
J. C. Morton.
\emph{Two-vector spaces and groupoids}. 
Appl. Categ. Structures 19 (2011) 659-707.
 

\bibitem[MS18a]{MS}
	L. Müller and R. J. Szabo.
	\emph{Extended quantum field theories, index theory and the parity anomaly}.
	Comm. Math. Phys. 362(3), 2018.	

\bibitem[MS18b]{MSb}
	L. Müller and R. J. Szabo.
	\emph{'t Hooft anomalies of discrete gauge theories and non-abelian group cohomology}.
	arXiv:1811.05446 [hep-th], 2018.	
	
\bibitem[Lau00]{Lau00}
	G. Laures.
    \emph{On cobordism  of  manifolds  with corners}.
	Trans. Amer. Math. Soc.
	352
	(2000), no. 12, 5667–5688 (electronic).
	
	
		\bibitem[SP11]{schommerpries}
	C. J. Schommer-Pries. The classification of two-dimensional extended topological field
	theories. Ph.D. thesis, 2011.
	
	
	
		
	
		\bibitem[SW18-P]{swpar}
		C. Schweigert, L. Woike.
		\emph{A Parallel Section Functor for 2-Vector Bundles}. 
			Theory and Applications of Categories, Vol. 33, 2018, No. 23,  644--690.
	

	
		\bibitem[SW18-O]{extofk}
	C. Schweigert, L. Woike.
	\emph{Extended Homotopy Quantum Field Theories and their Orbifoldization}. 
			arXiv:1802.08512 [math.QA], 2018.
			
				\bibitem[SW19]{schweigertwoikeofk}
			C. Schweigert, L. Woike.
			\emph{Orbifold Construction for Topological Field Theories}. 
			Journal of Pure and Applied Algebra 223 (2019) 1167--1192.
	
	
		\bibitem[Tur10]{turaevhqft}
		V. Turaev.
		\emph{Homotopy Quantum Field Theory}. With appendices by M. Müger and A. Virelizier. European Mathematical Society, 2010.
		
				\bibitem[Wal07]{waldorf}
	K. Waldorf.	\emph{Algebraic Structures for Bundle Gerbes and the Wess-Zumino Term in Conformal Field Theory.}
		PhD thesis, Universität Hamburg, December 2007.
	
		\bibitem[Wil08]{willterongerbesgrpds}
	S. Willerton.
	\emph{The twisted Drinfeld double of a finite group via gerbes and finite groupoids}. Algebraic \& Geometric Topology 8 (2008), 1419-1457.
	
\end{thebibliography}
\end{document}